\newtheorem{theorem}{Theorem}
\newtheorem{lemma}[theorem]{Lemma}
\newenvironment{proof}{\noindent{\scshape Proof.}}{\hspace*{2mm}~$\square$}
\newenvironment{demo}[1]{\noindent{\textbf{Proof of #1}}}{\hspace*{2mm}~$\square$}
\newcommand{\N}{\mathbb{N}}
\newcommand{\Z}{\mathbb{Z}}
\newcommand{\R}{\mathbb{R}}
\newcommand{\K}{\mathbb{K}}
\newcommand{\ind}{\mathbf{1}}
\newcommand{\ep}{\epsilon}
\newcommand{\ceil}[1]{\lceil{#1} \rceil}
\DeclareMathOperator{\card}{card}
\DeclareMathOperator{\poisson}{Poisson \,}
\DeclareMathOperator{\exponential}{Exponential \,}
\begin{document}

\begin{frontmatter}
\title     {Stochastic spatial model for the division \\ of labor in social insects}
\runtitle  {Stochastic spatial model for the division of labor}
\author    {Alesandro Arcuri and Nicolas Lanchier\thanks{Research supported in part by NSA Grant MPS-14-040958.}}
\runauthor {A. Arcuri and N. Lanchier}
\address   {School of Mathematical \\ and Statistical Sciences \\ Arizona State University \\ Tempe, AZ 85287, USA.}
\address   {School of Mathematical \\ and Statistical Sciences \\ Arizona State University \\ Tempe, AZ 85287, USA.}

\maketitle

\begin{abstract} \ \
 Motivated by the study of social insects, we introduce a stochastic model based on interacting particle systems in order to understand the effect of communication on the division of labor. Members of the colony are located on the vertex set of a graph representing a communication network. They are characterized by one of two possible tasks, which they update at a rate equal to the cost of the task they are performing by either defecting by switching to the other task or cooperating by anti-imitating a random neighbor in order to balance the amount of energy spent in each task. We prove that, at least when the probability of defection is small, the division of labor is poor when there is no communication, better when the communication network consists of a complete graph, but optimal on bipartite graphs with bipartite sets of equal size, even when both tasks have very different costs. This shows a non-monotonic relationship between the number of connections in the communication network and how well individuals organize themselves to accomplish both tasks equally.
\end{abstract}

\begin{keyword}[class=AMS]
\kwd[Primary]{60K35}
\end{keyword}

\begin{keyword}
\kwd{Interacting particle systems, social insects, division of labor, task allocation.}
\end{keyword}

\end{frontmatter}

%%%%%%%%%%%%%%%%%%%%%%%%%%%%%%%%%%%%%%%%%%%%%%%%%%%%%%%%%%%%%%%%%%%%%%%%%%%%%%%%%%%%%%%%%%%%%%%%%%%%%%%%%%%%%%%%%%%%%%%%%%%%%%%%%%%%%%%%%%

\section{Introduction}
\label{sec:intro}

\indent This work is primarily motivated by the study of social insects such as ants, honey bees, wasps and termites~\cite{wilson_1971} but applies more generally to any population whose individuals have the ability to exchange
 information in order to get organized socially.
 More specifically, we are interested in the following two fundamental components of social insects colonies. \vspace*{5pt}

\noindent {\bf Communication system.}
 Social insects are characterized by their well-developed communication system.
 For instance, ants communicate with each other using some pheromones that they most often leave on the soil surface to mark a trail leading from the colony to a food source.
 But pheromones are also used by ants to let other colony members know what task group they belong to.
 Ants can also communicate by direct contact, using for instance their antennae. \vspace*{5pt}

\noindent {\bf Division of labor.}
 The well-developed communication system of social insects is central to complex social behaviors, including division of labor~\cite{beshers_fewell_2001, wilson_1980}, i.e., cooperative work.
 Returning to the example of ants, except for the queens and reproductive males, the other individuals work together to create a favorable environment for the colony and the brood.
 The repertoire of tasks includes nest maintenance, foraging, brood care and nest defense, which have different costs.
 While some ants may specialize on a given task, which depends on their age class and morphology, most of the workers are totipotent~\cite{beshers_fewell_2001, wilson_1971}, meaning that they are able to perform all tasks and can
 therefore switch from one task to another in response to the need of the colony. \vspace*{5pt}

\noindent For a review on the various models of division of labor, we refer to~\cite{beshers_fewell_2001}.
 There, the authors distinguish six classes of models:
 response threshold, integrated threshold-information transfer, self-reinforcement, foraging for work, social inhibition, and network task allocation models.
 These six different classes of models are built on six different assumptions about the causes of division of labor in social insects.
 The stochastic model introduced in this paper belongs to the last class of models: network task allocation models~\cite{gordon_1992, pacala_1974}.
 Our objective is not to speculate on the communication system or on the division of labor in social insects, which is the work of experimental ecologists, but instead to understand how the communication system may affect the division of labor.
 More precisely, we think of the colony as being spread out on the vertex set of a graph representing a communication network in the sense that two individuals are given the opportunity to communicate if and only if the
 corresponding vertices are connected by an edge, and study the effects of the topology of the communication network on the division of labor.

\indent To design such a network task allocation model, it is natural to use the mathematical framework of interacting particle systems~\cite{durrett_1995, liggett_1985, liggett_1999}, which describe the interactions among particles
 or individuals located on a graph.
 The main objective of research in this field is to deduce the macroscopic behavior at the colony level that emerges from the microscopic rules at the individual level, which usually strongly depends on the topology of the network of interactions.

\indent One of the simplest and most popular example of interacting particle systems is the voter model, independently introduced in~\cite{clifford_sudbury_1973, holley_liggett_1975}.
 In this model, individuals are characterized by their opinion that they update at rate one by imitating one of their neighbors chosen uniformly at random.
 To obtain a model for the dynamics of tasks, we think of each individual as being characterized by the task they are performing rather than their opinion, and use instead a variant of the much less popular
 anti-voter model~\cite{matloff_1977}, also called the dissonant voting model, where individuals anti-imitate a neighbor chosen uniformly at random.
 The general idea is that individuals have no information about the overall division of labor beyond knowing which tasks their neighbors on the communication network are performing so, in order to balance the amount of energy
 spent in each task, they will try to perform a different task from their neighbors.
 Though our model has some similarities with previous network task allocation models, this paper relies on analytical results rather than just simulations and is therefore more mathematical in spirit than previous works on this topic.

%%%%%%%%%%%%%%%%%%%%%%%%%%%%%%%%%%%%%%%%%%%%%%%%%%%%%%%%%%%%%%%%%%%%%%%%%%%%%%%%%%%%%%%%%%%%%%%%%%%%%%%%%%%%%%%%%%%%%%%%%%%%%%%%%%%%%%%%%%

\section{Model description}
\label{sec:model}

\indent To construct our model for the dynamics of tasks, we first let~$G = (V, E)$ be a graph representing a communication network.
 The colony is spread out on the vertex set of this graph, with exactly one individual at each of the vertices, and two individuals are given the opportunity to communicate if and only if there is an edge connecting the
 corresponding two vertices.
 Individuals are characterized by the task they are performing, which they update at random times based on the information they get from their neighbors on the graph.
 For simplicity, we assume that the communication network is static, that there are only two tasks to be performed and that each individual is always performing exactly one task.
 As previously mentioned, it is natural to employ the framework of interacting particle systems in which we keep track of the task performed by each individual using a continuous-time Markov chain
 whose state at time~$t$ is a configuration
 $$ \xi_t : V \to \{1, 2 \} = \hbox{set of tasks to be performed} $$
 with~$\xi_t (x)$ denoting the task performed at time~$t$ by the individual at vertex~$x$.
 The evolution rules depend on a couple of parameters.
 First, we assume that performing task~$i$ has a cost~$c_i > 0$, which is included in the dynamics by interpreting the cost of a task as the rate at which an individual performing this task attempts to switch to the other task.
 We also assume that both tasks are equally important for the survival of the colony so, in the best case scenario, each time an individual communicates with a neighbor, it will cooperate by anti-imitating this neighbor in
 order to balance the amount of energy spent in each task.
 We consider an additional parameter~$\ep$ that we interpret as probability of defection and is included in the dynamics by assuming that, each time an individual wants to switch task,
\begin{list}{\labelitemi}{\leftmargin=1.75em}
 \item with probability~$\ep$, the individual defects by switching to the other task without communicating with any of its neighbors whereas \vspace*{3pt}
 \item with probability~$1 - \ep$, the individual cooperates by communicating with a random neighbor and anti-imitating this neighbor. \vspace*{-1pt}
\end{list}
 Note that individuals with no neighbor have no other choice than always defecting due to their lack of knowledge.
 To describe the evolution rules formally, we let
 $$ N_x := \{y \in V : (x, y) \in E \} \quad \hbox{for all} \quad x \in V $$
 be the interaction neighborhood of vertex~$x$ and
 $$ f_i (x, \xi) := \card \,\{y \in N_x : \xi (y) = i \} / \card (N_x) \quad \hbox{for} \quad i = 1, 2, $$
 be the fraction of neighbors of vertex~$x$ that are performing task~$i$ when the system is in configuration~$\xi$, which we assume to be zero when~$x$ has no neighbor.
 Then, the transition rates at vertex~$x$ described above verbally can be written into equations as
\begin{equation}
\label{eq:rate-labor}
  \begin{array}{rcl}
     1 \ \to \ 2 & \hbox{at rate} & c_1 \,(\ep + (1 - \ep)(1 - f_2 (x, \xi))) \vspace*{3pt} \\
     2 \ \to \ 1 & \hbox{at rate} & c_2 \,(\ep + (1 - \ep)(1 - f_1 (x, \xi))). \end{array}
\end{equation}
 By our convention (the fraction of neighbors is zero when there is no neighbor), when a vertex has no neighbor, it switches from task~$i$ to the other task at rate~$c_i$ in agreement with
 our verbal description of the model.
 Note also that the anti-voter model~\cite{matloff_1977} is simply obtained by setting~$c_1 = c_2$ and~$\ep = 0$, though this special case is not of particular interest in our biological context. \vspace*{5pt}

% % % % % % % % % % % % % % % % % % % % % % % % % % % % % % % % % % % % % % % % % % % % % % % % % % % % % % % % % % % % % % % % % % % % %

\noindent {\bf Division of labor.}
 The main objective is to understand how the topology of the communication network affects the division of labor, which we model using the random variable
\begin{equation}
\label{eq:division-labor}
  \phi (s) := \frac{1}{s \,\card (V)} \,\int_0^s X_t \,dt \quad \hbox{where} \quad X_t := \sum_{x \in V} \,\ind \{\xi_t (x) = 1 \}.
\end{equation}
 The process~$X_t$ keeps track of the number of individuals performing task~1 at time~$t$.
 We point out that this is not a Markov process in general because the rate at which the number of individuals performing a given task varies depends on the specific location of these individuals on the communication network.
 The random variable~$\phi (s)$ represents the fraction of time up to time~$s$ and averaged across all the colony task~1 has been performed.
 Lemma~\ref{lem:general} below will show that, at least when the defection probability is positive, this random variable converges almost surely to a limit that does not depend on the initial configuration of the system.
 Under our assumption that both tasks are equally important for the survival of the colony, the division of labor is optimal when this limit is one half and poor when the limit is close to either zero or one.

%%%%%%%%%%%%%%%%%%%%%%%%%%%%%%%%%%%%%%%%%%%%%%%%%%%%%%%%%%%%%%%%%%%%%%%%%%%%%%%%%%%%%%%%%%%%%%%%%%%%%%%%%%%%%%%%%%%%%%%%%%%%%%%%%%%%%%%%%%

\section{Main results}
\label{sec:results}

\indent To fix the ideas, we assume from now on without loss of generality that the first task is the less costly therefore we have~$c_1 < c_2$.
 To start with a reference value, assume that the communication network is completely disconnected, i.e., there is no edge, meaning no communication.
 In this case, an individual performing task~$i$ switches to the other task at rate~$c_i$ so, by independence,
 $$ \begin{array}{l} \lim_{s \to \infty} \,\phi (s) = \lim_{t \to \infty} \,P \,(\xi_t (x) = 1) = \bar v_1 := c_2 \,(c_1 + c_2)^{-1} > 1/2 \end{array} $$
 almost surely for all~$x \in V$.
 Note that the division of labor converges almost surely to the same limit~$\bar v_1$ for all communication networks when the defection probability is equal to one, since in this case the individuals never communicate with their neighbors.

\indent Looking now at more complex communication networks, Figure~\ref{fig:frame} shows the limit of the division of labor as a function of the defection probability obtained from numerical simulations of the process
 on each of the first three graphs of Figure~\ref{fig:graphs} but with one thousand instead of eight vertices like in the picture.
 As expected, the simulation results suggest that the division of labor gets improved, moving closer to one half, as the defection probability decreases and converges almost surely to the reference value~$\bar v_1$ as
 the defection probability increases to one.
 In addition, at least for small defection probabilities, the division of labor is better for the process on large complete graphs than for the process on completely disconnected graphs.
 However, the simulation results also suggest something less obvious:
 the division of labor is much better on the one-dimensional torus where individuals can only communicate with their two nearest neighbors than on the complete graph where individuals can communicate with every other individual.
 This reveals a non-monotonic relationship between the number of connections and how well individuals organize themselves to accomplish both tasks equally, which we now prove analytically.

\begin{figure}[t]
 \centering
 \scalebox{0.50}{\input{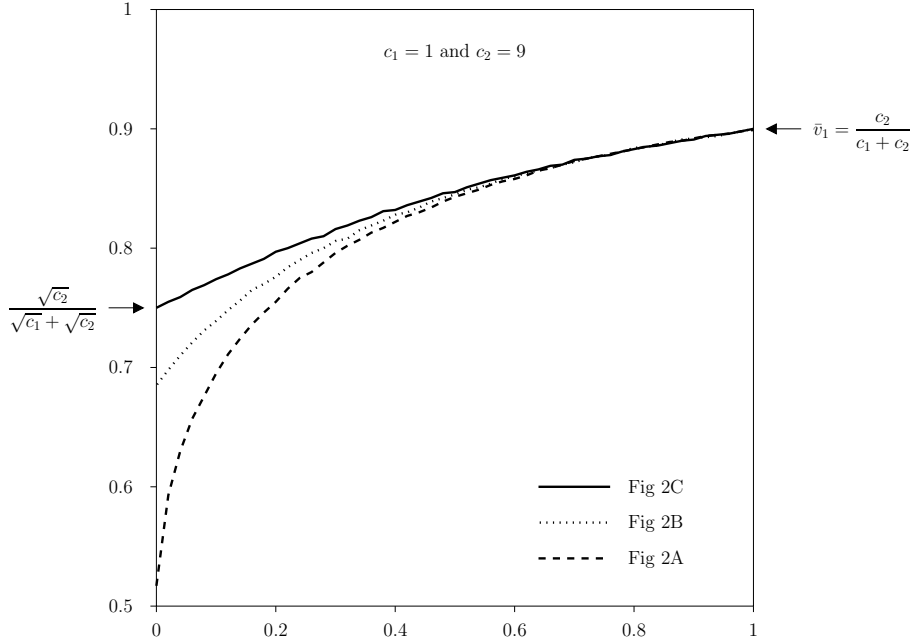}}
 \caption{\upshape Division of labor averaged over~$10^6$ updates of the system as a function of the probability of defection for the first three graphs of Figure~\ref{fig:graphs} with one thousand instead of eight vertices.
   For each graph, the curve is obtained from a single realization of the process starting with all individuals performing task~1.}
\label{fig:frame}
\end{figure}

\begin{figure}[t]
\centering
\includegraphics[width=0.70\textwidth]{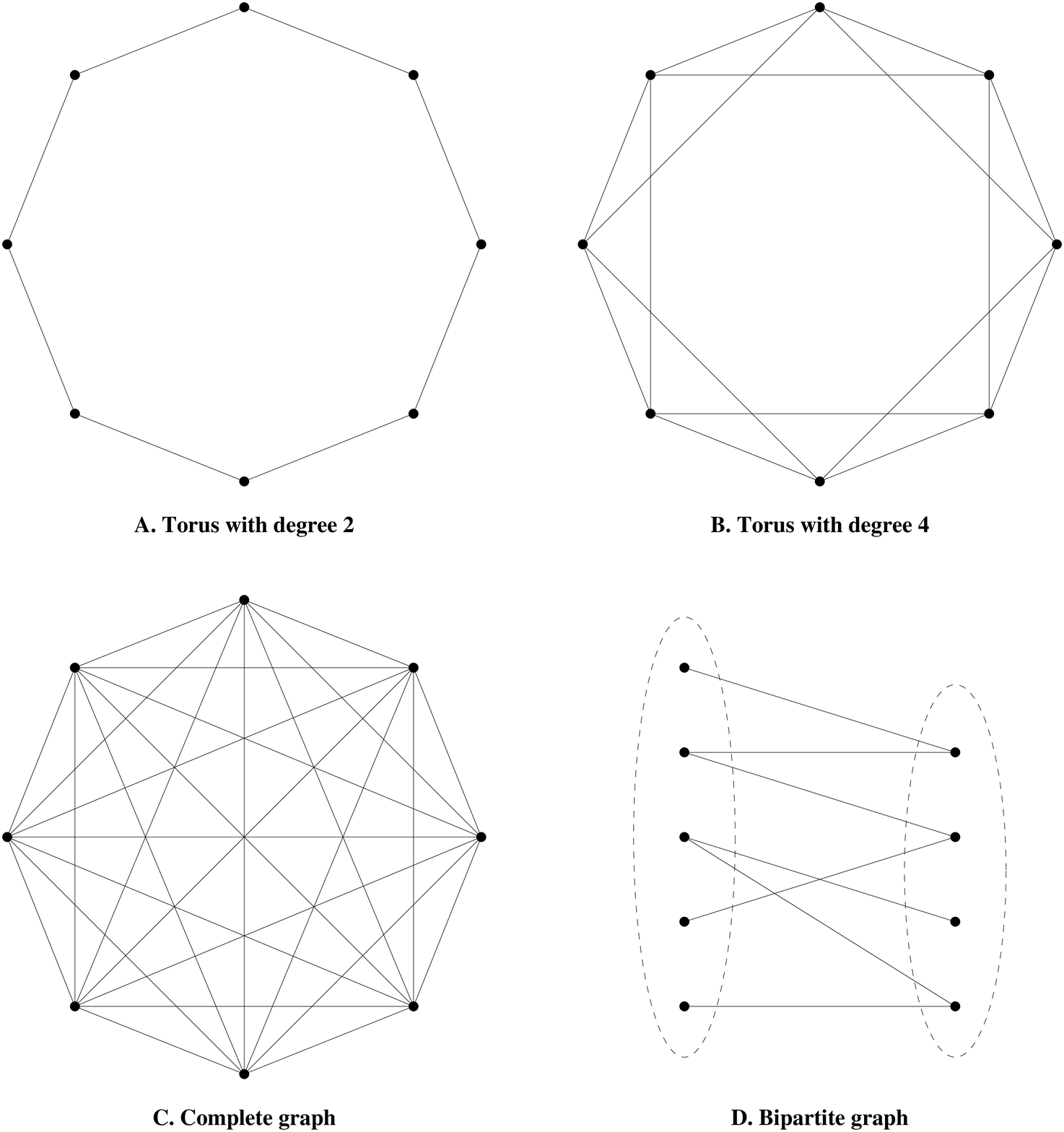}
\caption{\upshape{Examples of communication networks.
  The simulation curves in Figure~\ref{fig:frame} are for the first three networks: the tori with degree~2 and degree~4 and the complete graph.
  The process on complete graphs and on bipartite graphs, which includes the torus with degree~2 and an even number of vertices, are also studied analytically.}}
\label{fig:graphs}
\end{figure}

\indent To begin with, we give the exact limit of the division of labor for the process on complete graphs for all values of the defection probability.
 In particular, we obtain an explicit expression for the equation of the curve in solid line in Figure~\ref{fig:frame}.
 Then, we give lower and upper bounds for the limit of the division of labor when the defection probability is small and for the process on bipartite graphs, which will also give a proof that the dashed curve
 in Figure~\ref{fig:frame} indeed starts at one half using that the ring with an even number of vertices is an example of a bipartite graph.
 Finally, we will prove one more result for the process on the one-dimensional integer lattice to extend to infinite graphs the results obtained for finite bipartite graphs. \vspace*{5pt}

% % % % % % % % % % % % % % % % % % % % % % % % % % % % % % % % % % % % % % % % % % % % % % % % % % % % % % % % % % % % % % % % % % % % %

\noindent {\bf Complete graphs.}
 To begin with, we examine the process on a complete graph.
 We recall that the complete graph with~$N$ vertices, denoted by~$\K_N$, is such that
 $$ (x, y) \in E \quad \hbox{if and only if} \quad x, y \in V \ \hbox{with} \ x \neq y $$
 indicating that all the individuals can communicate with each other.
 From a modeling perspective, this is realistic for well-mixing colonies.
 In this case, we have the following result.
\begin{theorem} --
\label{th:complete}
 Let~$G = \K_N$ and~$\ep > 0$.
 Let~$B := (1 - \ep)(1 - 1/N)^{-1}$.
\begin{itemize}
\item Regardless of the initial state,
\begin{equation}
\label{eq:complete}
  \begin{array}{l} \lim_{s \to \infty} \,\phi (s) = \bar u_1 (B) := \displaystyle \frac{1}{2} - \frac{1}{2B} \,\bigg(\frac{c_1 + c_2}{c_1 - c_2} + \sqrt{(B - 1)^2 + \frac{4 \,c_1 c_2}{(c_1 - c_2)^2}} \bigg). \end{array}
\end{equation}
\item The function~$B \mapsto \bar u_1 (B)$ is decreasing on~$(0, 2)$ and
  $$ \begin{array}{l} \lim_{B \to 0} \,\bar u_1 (B) = \bar v_1 = \displaystyle \frac{c_2}{c_1 + c_2} > \bar u_1 (1) = \frac{\sqrt{c_2}}{\sqrt{c_1} + \sqrt{c_2}} > \bar u_1 (2) = 1/2. \end{array} $$
\end{itemize}
\end{theorem}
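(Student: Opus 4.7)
My plan is as follows. The first step exploits the complete vertex-transitivity of $\K_N$: the rates in~\eqref{eq:rate-labor} at each vertex depend only on the global count $X_t=\sum_{x\in V}\ind\{\xi_t(x)=1\}$, so $\{X_t\}_{t\ge 0}$ is itself a continuous-time birth--death chain on $\{0,1,\ldots,N\}$ with birth rate $b_k=(N-k)c_2(\ep+B(N-k-1)/N)$ and death rate $d_k=kc_1(\ep+B(k-1)/N)$. Because $\ep>0$, both rates are positive in the interior of the state space, so the chain is irreducible and admits a unique stationary distribution $\pi$. Lemma~\ref{lem:general} (equivalently, the ergodic theorem applied to $X_t/N$) then forces $\phi(s)\to m:=E_\pi[X]/N$ almost surely, independent of the initial configuration.

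The heart of the proof is to identify $m$ explicitly. Applying the generator $L$ to the coordinate $X$ and using $E_\pi[LX]=0$ produces a balance identity; substituting $\ep=1-B(N-1)/N$ to eliminate $\ep$ and writing $v:=E_\pi[X^2]/N^2$, the identity collapses to the affine relation
\begin{equation*}
  m\bigl[c_1+c_2+B(c_2-c_1)\bigr]-Bv(c_2-c_1)=c_2.
\end{equation*}
I would close this one-equation-in-two-unknowns system via the concentration bound $v=m^2$, which on a complete graph follows from a standard variance estimate for the reversible birth--death chain---the drift $b_k-d_k$ has a single stable zero with a quadratic restoring force that forces $\mathrm{Var}_\pi(X)/N^2$ to vanish. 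Substituting produces the quadratic $B(c_2-c_1)m^2-[c_1+c_2+B(c_2-c_1)]m+c_2=0$, whose discriminant simplifies to $4c_1c_2+(c_2-c_1)^2(B-1)^2$ via the identity $(c_1+c_2)^2-(c_2-c_1)^2=4c_1c_2$. Selecting the root in the physically meaningful range $(1/2,\bar v_1)$ and rearranging yields~\eqref{eq:complete}.

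For the second bullet I would work directly with the explicit $\bar u_1(B)$. The three boundary values are routine algebra: $B\to 0$ is an indeterminate $0/0$ resolved by first-order Taylor expansion of the radical, recovering $\bar v_1=c_2/(c_1+c_2)$; at $B=1$ the radical equals $2\sqrt{c_1c_2}/(c_2-c_1)$ and the factorization $c_2-\sqrt{c_1c_2}=\sqrt{c_2}(\sqrt{c_2}-\sqrt{c_1})$ delivers $\sqrt{c_2}/(\sqrt{c_1}+\sqrt{c_2})$; at $B=2$ the bracketed expression vanishes identically, leaving $1/2$. Monotonicity on $(0,2)$ follows by computing $\partial_B\bar u_1(B)$, isolating the radical, and squaring to check its sign. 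The main technical obstacle is the closure $v=m^2$: the first-moment balance is not self-contained, and all substantive probabilistic content is concentrated in the accompanying variance estimate for $\pi$, where I expect the specific choice $B=(1-\ep)(1-1/N)^{-1}$ to absorb the finite-$N$ corrections in a clean way.
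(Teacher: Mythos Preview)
Your overall architecture matches the paper's: Lemma~\ref{lem:general} supplies the ergodic limit $m=E_\pi[X]/N$, and complete-graph symmetry reduces $X_t$ to the birth--death chain with rates~\eqref{eq:birth}--\eqref{eq:death}. The paper then differs in presentation: instead of your stationary identity $E_\pi[LX]=0$, it writes an ODE $u_1'=(1/N)(\beta_{Nu_1}-\delta_{Nu_1})$ for $u_1(t)=E[X_t/N]$ and identifies $\bar u_1(B)$ as its unique fixed point in $(0,1)$. For the second bullet your treatment of the three special values is essentially Lemmas~\ref{lem:first-limit}--\ref{lem:third-limit}; for monotonicity the paper splits into $B<1$ (implicit differentiation of the quadratic $P(B,X)=0$) and $B\ge1$ (a direct bound on $\bar u_1'(B)$) rather than a single square-and-compare argument, but either route works.

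The genuine gap is your closure $v=m^2$. The identity $E_\pi[LX]=0$ does give one linear relation between $m$ and $v=E_\pi[X^2]/N^2$, but your claim that the quadratic restoring drift ``forces $\mathrm{Var}_\pi(X)/N^2$ to vanish'' is false for any fixed finite $N$: an irreducible birth--death chain on $\{0,\ldots,N\}$ has strictly positive stationary variance, so $v>m^2$. The restoring-force heuristic only yields $\mathrm{Var}_\pi(X)=O(N)$, i.e.\ $v-m^2=O(1/N)$, an asymptotic statement that does not collapse to an identity; the parametrization $B=(1-\ep)(1-1/N)^{-1}$ does not absorb this $O(1/N)$ remainder. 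Concretely, at $N=2$, $\ep=1/2$, $c_1=1$, $c_2=4$ (so $B=1$) detailed balance gives $\pi=(1,16,16)/33$ and hence $m=8/11$, whereas $\bar u_1(1)=2/3$. You are not missing an idea that the paper supplies: the paper's ODE step makes the equivalent unjustified replacement $E[\beta_{X_t}-\delta_{X_t}]\mapsto\beta_{E[X_t]}-\delta_{E[X_t]}$ (the rates are quadratic in $j$, so this is not innocent), which is the same mean-field closure in disguise. Both arguments therefore establish~\eqref{eq:complete} only as the $N\to\infty$ limit, not as an exact identity for each finite $N$ as the theorem is stated.
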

 Noticing that~$B (\ep, N)$ is decreasing with respect to both the probability of defection and the colony size, the monotonicity of the function~$\bar u_1$ implies the following.
\begin{itemize}
 \item The smaller the probability of defection~$\ep$, the better the division of labor, i.e., the fraction of individuals performing any given task gets closer to one half.
  This conclusion is expected from the definition of our model: cooperation improves the division of labor. \vspace*{3pt}
 \item The smaller the colony size~$N$, the better the division of labor.
  The intuition behind this result is that, at each update of an individual, the knowledge this individual gets about the tasks performed by the rest of the colony is only through a single random individual therefore
  the amount of information obtained from each interaction about the overall division of labor gets smaller as the size of the colony increases.
\end{itemize}
 The three different values of the limiting fraction of individuals performing task~1 in the second part of the theorem are also interesting from a biological point of view.
\begin{itemize}
 \item The limit as~$B \to 0$, i.e., $\ep \to 1$, is not obvious because~\eqref{eq:complete} is not defined at~$B = 0$.
  However, the process is well defined when~$\ep = 1$.
  In this case, there is no cooperation so the process evolves as when the communication network is completely disconnected, which explains the value of the limit: $\bar v_1 = c_2 \,(c_1 + c_2)^{-1}$. \vspace*{3pt}
 \item The value at~$B = 1$ is the most interesting one biologically because it gives the limiting behavior when the colony is large and the probability of defection is small.
  In this case, cooperation acts positively by making the fraction of individuals performing any given task closer to one half compared to the scenario where~$\ep = 1$. \vspace*{3pt}
 \item The value at~$B = 2$ corresponds to~$\ep = 0$ and~$N = 2$ and can be easily guessed:
  there is perfect cooperation and only two individuals so the two states where one individual performs task~1 and the other task~2 are absorbing states, which gives the value~1/2.
\end{itemize} \vspace*{5pt}

% % % % % % % % % % % % % % % % % % % % % % % % % % % % % % % % % % % % % % % % % % % % % % % % % % % % % % % % % % % % % % % % % % % % %

\noindent {\bf Bipartite graphs.}
 Theorem~\ref{th:complete} shows that, at least for complete graphs, the division of labor is optimal, i.e., converges to one half, when there are only two vertices and the defection probability converges to zero.
 We now extend this result to finite, connected, bipartite graphs.
 Recall that a graph is bipartite when there exists a partition~$\{V_1, V_2 \}$ of the vertex set into so-called bipartite sets such that vertices in the same bipartite set cannot be neighbors, i.e.,
 $$ (x, y) \in E \quad \hbox{implies that} \quad (x, y) \in V_1 \times V_2 \quad \hbox{or} \quad (x, y) \in V_2 \times V_1. $$
 In other words, individuals are separated into two groups and only communicate with members of the opposite group (see Figure~\ref{fig:graphs}D).
 At least when the defection probability is small, the dynamics of the process tends to separate the two tasks in each of the two bipartite sets.
 More precisely, the division of labor is bounded from above and from below as follows.
\begin{theorem} --
\label{th:bipartite}
 Assume that~$G$ has~$N$ vertices, is connected and bipartite. Then,
\begin{itemize}
 \item for all~$\rho > 0$, there exists~$\ep_0 > 0$ such that
 $$ \begin{array}{l} (1 - \rho) \,\min (N_1/N, N_2/N) \leq \lim_{s \to \infty} \,\phi (s) \leq (1 + \rho) \,\max (N_1/N, N_2/N) \end{array} $$
 almost surely for all~$\ep < \ep_0$, where~$N_i = \card (V_i)$ for~$i = 1, 2$.
\end{itemize}
\end{theorem}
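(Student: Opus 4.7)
The plan is to study the unique stationary distribution $\pi_\ep$ of the chain $\xi_t$ and to show that it concentrates, as $\ep \downarrow 0$, on the two absorbing configurations of the cooperative-only ($\ep=0$) dynamics. For any $\ep>0$, the chain $\xi_t$ is irreducible on the finite state space $\{1,2\}^V$ because defection lets any vertex flip at positive rate regardless of its neighbors; hence $\pi_\ep$ exists and is unique, and by Lemma~\ref{lem:general} the almost-sure limit $\lim_{s\to\infty}\phi(s)$ equals $\sum_\xi \pi_\ep(\xi)\,X(\xi)/N$ with $X(\xi):=\card\{x\in V:\xi(x)=1\}$.

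By~\eqref{eq:rate-labor} with $\ep=0$, vertex $x$ with current task $i$ flips at rate $c_i\,f_i(x,\xi)$, which vanishes precisely when all neighbors of $x$ have the opposite task. The absorbing configurations of the $\ep=0$ dynamics are therefore the proper $2$-colorings of $G$, and since $G$ is connected and bipartite there are exactly two,
\[ \eta_1:\ V_1\to 1,\ V_2\to 2 \qquad\text{and}\qquad \eta_2:\ V_1\to 2,\ V_2\to 1, \]
with $X(\eta_1)/N=N_1/N$ and $X(\eta_2)/N=N_2/N$.

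Next I would establish that $\pi_\ep(\{\eta_1,\eta_2\}) \ge 1 - C\ep$ for $\ep$ small. The key intermediate step is that at $\ep=0$ the chain reaches $\{\eta_1,\eta_2\}$ from every initial configuration in finite expected time, i.e.\ that $\{\eta_1,\eta_2\}$ is the unique closed communicating class of the $\ep=0$ chain. Using connectedness and bipartiteness, from any non-absorbing configuration I would build an explicit sequence of available flips that eliminates monochromatic edges one by one, working outward along a spanning tree toward $\eta_1$, say; each proposed flip has positive rate because the vertex to be flipped still has a same-task neighbor. Given this, a standard excursion argument gives the desired concentration: the chain exits $\{\eta_1,\eta_2\}$ only via a defection, whose total rate is at most $(c_1+c_2)\,N\,\ep$, whereas the expected return time to $\{\eta_1,\eta_2\}$ from any other state is bounded uniformly in $\ep$ (for $\ep$ small) by comparison with the $\ep=0$ chain.

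Combining the two steps,
\[ \sum_\xi \pi_\ep(\xi)\,\frac{X(\xi)}{N} \;=\; \pi_\ep(\eta_1)\,\frac{N_1}{N} + \pi_\ep(\eta_2)\,\frac{N_2}{N} + R(\ep), \qquad 0 \le R(\ep) \le \pi_\ep(\{\eta_1,\eta_2\}^c) \le C\ep. \]
The first two terms form a subconvex combination of $N_1/N$ and $N_2/N$ with total weight $\ge 1-C\ep$, so the whole expression lies in $[(1-C\ep)\min(N_1/N,N_2/N),\,\max(N_1/N,N_2/N)+C\ep]$. Choosing $\ep_0$ small enough then yields the claimed $1\pm\rho$ bounds. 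The main obstacle is the combinatorial claim on the $\ep=0$ dynamics: bipartiteness is used essentially---on a non-bipartite graph the cooperative-only chain has no absorbing state---and a careful inductive construction is needed to guarantee that every flip in the proposed sequence is available at the moment it is used.
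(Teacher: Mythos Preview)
Your proposal is correct and follows essentially the same route as the paper: identify the two proper $2$-colorings as the only absorbing states of the $\ep=0$ dynamics, then run an excursion argument showing that $\pi_\ep$ puts mass at least $1-O(\ep)$ on these two states because the exit rate is $O(N\ep)$ while the expected return time is bounded uniformly in $\ep$. Your spanning-tree construction for the return step is actually more explicit than the paper, which simply asserts that the complement $C_0$ of $\{\xi_-,\xi_+\}$ is a single non-closed communication class and then invokes finiteness; processing vertices outward from a root and flipping each vertex that agrees with its parent (the flip being available precisely because the parent is a same-task neighbor) does produce a proper $2$-coloring, since bipartiteness forces every non-tree edge to join vertices at odd tree-distance.
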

 Note that a complete graph is a bipartite graph if and only if it has only two vertices.
 In this case, the previous theorem implies that, for all~$\ep < \ep_0$,
\begin{equation}
\label{eq:bipartite-half}
  \begin{array}{l} (1/2)(1 - \rho) \leq \lim_{s \to \infty} \,\phi (s) \leq (1/2)(1 + \rho), \end{array}
\end{equation}
 showing that the division of labor converges to one half as~$\ep \to 0$.
 In particular, Theorem~\ref{th:bipartite} indeed extends the last limit in the statement of Theorem~\ref{th:complete}.
 Figure~\ref{fig:frame}A and more generally any torus with degree two and an even number of vertices are other examples of bipartite graphs where both bipartite sets have the same cardinality.
 In particular, the theorem implies that~\eqref{eq:bipartite-half} holds for all these graphs, which gives a proof of the simulation results of Figure~\ref{fig:frame} for the dashed curve in the neighborhood of~$\ep = 0$.
 Another example of communication network more relevant for ecologists is to assume that the colony is spread out on the two-dimensional grid with vertex set
 $$ V := \Z^2 \cap \{[0, L] \times [0, H] \} \quad \hbox{where} \quad L, H \in \N^* $$
 and where there is an edge between vertices at Euclidean distance one from each other.
 In this context, the individuals are mostly static and can only communicate with individuals which are close to them.
 This is an example of bipartite graph.
 The bipartite sets are given by
 $$ V_1 := \{x \in V : x_1 + x_2 \ \hbox{is odd} \} \quad \hbox{and} \quad V_2 := \{x \in V : x_1 + x_2 \ \hbox{is even} \}. $$
 These two sets differ in cardinality by at most one so
 $$ \begin{array}{l} (1 - \rho)(1/2 - 1/N) \leq \lim_{s \to \infty} \,\phi (s) \leq (1 + \rho)(1/2 + 1/N), \end{array} $$
 for all~$\ep > 0$ small according to Theorem~\ref{th:bipartite}, showing that, at least when~$\ep$ is small and the colony size is large, the division of labor again approaches one half.
 Since these bipartite graphs have more edges than completely disconnected graphs but less edges than complete graphs, this shows a non-monotonic relationship between the degree distribution of the communication network
 and how well individuals organize themselves to accomplish both tasks equally. \vspace*{5pt}

% % % % % % % % % % % % % % % % % % % % % % % % % % % % % % % % % % % % % % % % % % % % % % % % % % % % % % % % % % % % % % % % % % % % %

\noindent {\bf One-dimensional lattice.}
 The last communication network we consider is the one-dimensional integer lattice:
 individuals are arranged in a line and can only communicate with their two nearest neighbors on the left and on the right.
 Our main motivation is to show that more sophisticated techniques can be used to study the process on a simple infinite graph.
 In this case, the existence of the process follows from a general result due to Harris~\cite{harris_1972}.
 Because the graph is infinite, the random variables in~\eqref{eq:division-labor} are no longer well-defined therefore we study instead the division of labor in an increasing sequence of spatial intervals
 $$ \phi_N (s) := \frac{1}{s \,(2N + 1)} \,\int_0^s \sum_{|x| \leq N} \,\ind \{\xi_t (x) = 1 \} \,dt $$
 as time~$s$ and space~$N$ both go to infinity.
 Then, for the process on the infinite one-dimensional lattice, we have the following result that extends in part Theorem~\ref{th:bipartite}.
\begin{theorem} --
\label{th:lattice}
 Assume that~$G$ is the one-dimensional integer lattice and that the initial distribution of the process is a Bernoulli product measure. Then,
\begin{itemize}
 \item for all~$\rho > 0$, there exists~$\ep_0 > 0$ such that
 $$ \begin{array}{l} (1/2)(1 - \rho) \leq \lim_{s, N \to \infty} \,\phi (s, N) \leq (1/2)(1 + \rho) \quad \hbox{for all} \quad \ep < \ep_0. \end{array} $$
\end{itemize}
\end{theorem}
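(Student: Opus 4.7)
The plan combines a translation-invariance reduction to a single two-point probability with a spatial ergodic theorem and a uniform-in-time bound on the density of adjacent agreement edges. Since both the Bernoulli product initial law and the rates on $G=\Z$ are invariant under the shift $x\mapsto x+1$, the distribution of $\xi_t$ is translation invariant for every $t\ge 0$. Setting $p(t):=P(\xi_t(0)=1)$, $W(t):=P(\xi_t(0)\ne\xi_t(1))$, $a(t):=P(\xi_t(0)=1,\xi_t(1)=1)$ and $d(t):=P(\xi_t(0)=2,\xi_t(1)=2)$, translation invariance forces the two mixed two-point probabilities to agree, and a short computation yields the deterministic bound
\[
  \bigl|p(t)-\tfrac12\bigr| \;=\; \tfrac12\,|a(t)-d(t)| \;\le\; \tfrac12\,\bigl(1-W(t)\bigr).
\]

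Next, the Harris graphical construction realizes $\xi_t$ as a translation-covariant function of $\xi_0$ together with an independent family of spatially i.i.d.\ marked Poisson processes. Since the Bernoulli product measure and the Poisson input are shift-ergodic, so is $\xi_t$ for every $t\ge 0$, and applying Birkhoff's theorem to the bounded functional $\omega\mapsto \frac{1}{s}\int_0^s \ind\{\xi_t(0)=1\}\,dt$ gives $\lim_{N\to\infty}\phi(s,N)=\frac{1}{s}\int_0^s p(t)\,dt$ almost surely. Combined with the previous inequality, the theorem reduces to showing that for every $\rho>0$ one can choose $\epsilon_0>0$ with $\limsup_{t\to\infty}(1-W(t))\le\rho$ whenever $\epsilon<\epsilon_0$.

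This last estimate is the heart of the argument. At $\epsilon=0$ the two alternating configurations are the only absorbing ones: a vertex flanked by two same-labelled neighbours flips at a positive rate (creating two walls), whereas a vertex flanked by two opposite-labelled neighbours has flip rate zero, so agreement edges can only be created, never destroyed. For small $\epsilon>0$ the defection term introduces annihilations at rate $O(\epsilon)$ while the creation rate at every agreement edge remains bounded below by a positive constant. The plan is to formalize this by writing the generator-based evolution equation
\[
  \tfrac{d}{dt}\bigl(1-W(t)\bigr) \;=\; -\,A(t)\,\bigl(1-W(t)\bigr) + \epsilon\,B(t),
\]
with $A(t)$ bounded below and $B(t)$ bounded above uniformly in $t$ for $\epsilon$ small; integrating yields $\limsup_{t\to\infty}(1-W(t))\le C\epsilon$, so $\epsilon_0:=\rho/C$ will suffice.

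The step I expect to be the main obstacle is controlling the three-point correlations (terms of the form $P(\xi_t(-1)=\xi_t(0)=\xi_t(1))$) that appear when differentiating $W(t)$: the hierarchies of two- and three-point functions do not close in general. Two strategies look promising for bounding them uniformly in $t$ and $\epsilon$: (i) a Lyapunov argument on a monotone functional of the full correlation hierarchy that exploits the fact that the $\epsilon=0$ dynamics is essentially attractive toward the alternating configurations on~$\Z$; or (ii) a finite-volume comparison in which one couples $\xi_t$ on $\Z$ with the process on an even-length cycle via the graphical representation and the linear growth of the domain of dependence, and then invokes Theorem~\ref{th:bipartite} on the cycle. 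The latter route is delicate because the $\epsilon_0$ in Theorem~\ref{th:bipartite} may depend on the cycle length, so one would need a uniform-in-volume version of that estimate.
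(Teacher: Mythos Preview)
Your reduction is sound and matches the paper's endgame: translation (and reflection) invariance gives $|p(t)-\tfrac12|\le\tfrac12(1-W(t))$, and the spatial ergodic theorem converts this into the statement about $\phi_N(s)$. The theorem therefore does reduce to showing that $1-W(t)=P(\xi_t(0)=\xi_t(1))$ can be made uniformly small in large $t$ by taking $\ep$ small.

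The gap is in the core estimate. Your differential inequality $\frac{d}{dt}(1-W)=-A(t)(1-W)+\ep\,B(t)$ with $A(t)$ bounded below is not established and does not hold in this form: applying the generator to $\ind\{\xi(0)=\xi(1)\}$ produces three-point probabilities such as $P(\xi_t(-1)=\xi_t(0)=\xi_t(1))$, and there is no linear relation expressing these in terms of $1-W(t)$ alone. You acknowledge this closure problem, but neither proposed workaround is carried out: no concrete Lyapunov functional is given, and the cycle comparison is blocked, as you yourself note, by the volume dependence of $\ep_0$ in Theorem~\ref{th:bipartite}. There is also a sign slip in your description of the $\ep=0$ dynamics: agreement edges are \emph{never created} when $\ep=0$ --- they perform annihilating random walks and can only move or disappear in pairs --- while defection events \emph{create} pairs of agreement edges at rate $O(\ep)$, the opposite of what you wrote.

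The paper sidesteps the moment hierarchy entirely by a coupling on the graphical representation. One runs the $\ep>0$ process and the $\ep=0$ process from the same initial configuration using the same arrows; they agree at the edge $(x,x+1)$ at time $t$ unless a defection mark (a $\bullet$ or $\times$) lands in the influence set of $\{x,x+1\}$ over the window $[t-T,t]$. Three short lemmas handle the pieces: (i) for the $\ep=0$ process, $P(\xi^0_t(x)=\xi^0_t(x+1))<\rho/3$ once $T$ is large, since agreement edges form a system of annihilating random walks that goes extinct; (ii) the influence set over $[t-T,t]$ is contained in a box of width $O(T)$ with probability $>1-\rho/3$, by a Poisson large-deviation bound; (iii) given that box, no defection mark falls in it with probability $>1-\rho/3$ once $\ep$ is small enough. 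Summing gives $P(\xi_t(x)=\xi_t(x+1))<\rho$ uniformly for $t>T$, which is exactly the estimate your ODE argument was unable to close.
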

 The theorem states that, when the defection probability is small, the fraction of individuals performing task~1 in a large spatial interval approaches one half.
 Even though the one-dimensional lattice is a bipartite graph, the result does not simply follow from Theorem~\ref{th:bipartite} because the two bipartite sets are infinite.
 Instead, the proof relies on a coupling between the dynamics of tasks and a system of annihilating random walks together with large deviation estimates.

%%%%%%%%%%%%%%%%%%%%%%%%%%%%%%%%%%%%%%%%%%%%%%%%%%%%%%%%%%%%%%%%%%%%%%%%%%%%%%%%%%%%%%%%%%%%%%%%%%%%%%%%%%%%%%%%%%%%%%%%%%%%%%%%%%%%%%%%%%

\section{Proof of Theorem~\ref{th:complete} (complete graphs)}
\label{sec:complete}

\indent This section is devoted to the proof of Theorem~\ref{th:complete} which assumes that the communication network is a complete graph.
 To prove that the division of labor converges almost surely to a limit that does not depend on the initial state, we start with a general lemma about finite graphs that will be used in this section and the next one.
 Now, in the special case of complete graphs, the process~$X_t$ that keeps track of the number of individuals performing task~1 turns out to be a continuous-time birth and death process.
 From this, we will deduce that the expected fraction of individuals performing task~1 satisfies a certain ordinary differential equation.
 The exact value of the almost sure limit of the division of labor is then obtained by showing that this limit is the unique fixed point in the biologically relevant interval~$(0, 1)$ of the differential equation.
\begin{lemma} --
\label{lem:general}
 Let~$G$ be finite and~$\ep > 0$.
 Then~$\phi (s)$ converges almost surely to a limit that does not depend on the initial configuration of the stochastic process.
\end{lemma}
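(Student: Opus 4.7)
The plan is to exploit the finiteness of the graph $G$, which makes the configuration space $\{1, 2\}^V$ finite and hence turns $(\xi_t)_{t \geq 0}$ into a continuous-time Markov chain on a finite state space. Once this is established, the almost sure convergence of $\phi (s)$ to a limit independent of the starting configuration will follow from standard Markov chain ergodic theory, and the role of the hypothesis $\ep > 0$ will be only to guarantee irreducibility.

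First, I would show that the chain is irreducible. Reading the rates~\eqref{eq:rate-labor}, the transition at vertex~$x$ from task~$\xi(x)$ to the other task occurs at rate at least $c_{\xi(x)} \,\ep > 0$, because the term $\ep$ appears as a summand inside the parentheses regardless of the fractions $f_i (x, \xi)$ (and in fact even when $x$ has no neighbor, by our convention). Hence any single-vertex flip has strictly positive rate, and by composing at most $\card (V)$ such flips one can go from any configuration to any other with positive probability. On the finite state space $\{1, 2\}^V$ this is exactly irreducibility, so the chain admits a unique stationary distribution $\pi$ and is positive recurrent.

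Next, I would apply the ergodic theorem for irreducible continuous-time Markov chains on a finite state space: for any bounded function $h : \{1, 2\}^V \to \R$ and any initial configuration,
$$ \begin{array}{l} \lim_{s \to \infty} \,\displaystyle \frac{1}{s} \int_0^s h (\xi_t) \,dt = \sum_\xi h (\xi) \,\pi (\xi) \quad \hbox{almost surely.} \end{array} $$
Applying this to $h (\xi) := \card (V)^{-1} \sum_{x \in V} \ind \{\xi (x) = 1 \}$ immediately yields the almost sure convergence of $\phi (s)$ to $E_\pi [h]$, a constant which depends on the rates and the graph but not on the initial configuration $\xi_0$, as claimed.

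There is no real obstacle: the only substantive step is the irreducibility argument, and the only role played by the assumption $\ep > 0$ is to ensure it. This is also consistent with the fact that the lemma genuinely fails at $\ep = 0$, since for example on $\K_2$ with $\ep = 0$ the two configurations assigning different tasks to the two vertices are absorbing and the limit of $\phi (s)$ then depends on the starting state.
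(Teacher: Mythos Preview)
Your proof is correct and follows essentially the same route as the paper: bound each single-site flip rate below by $c_{\xi(x)}\,\ep>0$ to obtain irreducibility on the finite state space $\{1,2\}^V$, then invoke the ergodic theorem for finite continuous-time Markov chains to get almost sure convergence of $\phi(s)$ to $E_\pi[h]$. The paper's argument is a bit terser (it does not spell out the composition of flips or the final remark on $\ep=0$), but the substance is identical.
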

\begin{proof}
 Let~$\xi$ be a configuration and~$x$ be a vertex, and assume that~$\xi (x) = i$.
 Since~$\ep > 0$, the rate at which vertex~$x$ switches task is bounded from below by
 $$ c_i \,(\ep + (1 - \ep)(1 - f_j (x, \xi))) \geq \ep \,c_i \geq \ep \,c_1 > 0 \quad \hbox{where} \quad \{i, j \} = \{1, 2 \}. $$
 This shows that the process is irreducible.
 Since in addition the communication network is finite, the state space of the process is finite as well.
 In particular, standard Markov chain theory implies that there is a unique stationary distribution~$\pi$ to which the process converges weakly starting from any initial configuration.
 Moreover, we have the almost sure convergence
\begin{equation}
\label{eq:general}
 \begin{array}{l} \lim_{s \to \infty} \,\phi (s) = E_{\pi} (\sum_{x \in V} \ind \{\xi (x) = 1 \} / N) \end{array}
\end{equation}
 where the integer~$N$ denotes the number of vertices.
\end{proof} \\ \\
 In the next lemma, we compute the limit~\eqref{eq:general} for complete graphs.
 Following the notation of the theorem, we let~$\bar u_1 (B)$ denote this limit where~$B = (1 - \ep)(1 - 1/N)^{-1}$.
\begin{lemma} --
\label{lem:complete-limit}
 For all~$\ep \in (0, 1]$ and~$N \geq 2$, we have
\begin{equation}
\label{eq:complete-limit-0}
  \bar u_1 (B) = \frac{1}{2} - \frac{1}{2B} \,\bigg(\frac{c_1 + c_2}{c_1 - c_2} + \sqrt{(B - 1)^2 + \frac{4 \,c_1 c_2}{(c_1 - c_2)^2}} \bigg).
\end{equation}
\end{lemma}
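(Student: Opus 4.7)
My plan is to exploit the fact that, on the complete graph~$\K_N$, the count $X_t$ of vertices performing task~$1$ is itself a continuous-time Markov chain---a birth-and-death chain---because vertex exchangeability implies that the transition rates at every vertex depend only on the global fraction $X_t/N$. Using the identity $1 - \ep = B(N-1)/N$, the flip rate of a $1$-vertex simplifies to $c_1 (1 - B(N - X_t)/N)$, so the aggregate death and birth rates take the clean form
\[ \delta_k = k\,c_1\,(1 - B(N-k)/N), \qquad \beta_k = (N-k)\,c_2\,(1 - Bk/N). \]

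Next, I would apply the generator to the identity function $x \mapsto x$ to obtain an evolution equation for $E[X_t]$. Expanding $\beta_k - \delta_k$ yields a polynomial which is quadratic in $k$, so after dividing by $N$ and invoking Lemma~\ref{lem:general} to pass to the stationary distribution, the stationarity identity becomes
\[ c_2 = [(1+B)\,c_2 + (1-B)\,c_1]\,\bar u_1(B) - B(c_2 - c_1)\,E_\pi[(X/N)^2]. \]
The key structural step is then to replace the second moment by $\bar u_1(B)^2$, which shows that $\bar u_1(B)$ solves the closed quadratic
\[ B(c_2 - c_1)\,u\,(1-u) = c_2 - (c_1 + c_2)\,u. \]

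Once this quadratic is in hand, the rest of the proof reduces to algebra. The discriminant collapses to $4 c_1 c_2 + (c_2-c_1)^2 (B-1)^2$; pulling $(c_2-c_1)^2$ out of the square root and rearranging, one of the roots matches the right-hand side of~\eqref{eq:complete-limit-0}. To confirm this is the unique root in $(0, 1)$, I would check consistency with the boundary values $B \to 0$ (which must recover $\bar v_1 = c_2/(c_1 + c_2)$) and $B = 1$ (which must give $\sqrt{c_2}/(\sqrt{c_1} + \sqrt{c_2})$), and use continuity in $B$ to rule out the spurious branch.

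The main obstacle is precisely the moment-closure step. Since $\beta_X - \delta_X$ is genuinely quadratic in $X$, the evolution equation for $E[X_t/N]$ is not autonomous: it couples to $E[(X_t/N)^2]$. I would expect this to be handled through a fluid-limit or mean-field justification---either by concentration of the density-dependent birth-and-death chain $X_t/N$ around the deterministic ODE solution, or by characterizing $\bar u_1(B)$ directly as the unique root in $(0,1)$ of the limiting fixed-point equation obtained after replacing the second moment by $u^2$. Once this closure is in place, the asserted monotonicity of $B \mapsto \bar u_1(B)$ in Theorem~\ref{th:complete} forces the chosen root to be the correct branch, and the formula follows.
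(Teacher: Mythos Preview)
Your overall strategy---reduce to a birth-and-death chain on $\{0,\dots,N\}$, compute the drift, and solve the resulting quadratic---is exactly the paper's. The paper writes the birth and death parameters in the form~\eqref{eq:birth}--\eqref{eq:death} and then passes directly to the autonomous ordinary differential equation
\[
u_1' \;=\; \frac{1}{N}\,\beta_{N u_1(t)} - \frac{1}{N}\,\delta_{N u_1(t)} \;=\; Q(u_1,1-u_1),
\]
that is, it substitutes $X_t \mapsto N u_1(t)$ inside the rates from the outset rather than taking expectations. This is precisely the moment closure you isolate as the ``main obstacle'': the honest generator identity $\tfrac{d}{dt}\,E[X_t]=E[\beta_{X_t}-\delta_{X_t}]$ couples the first moment to the second because $\beta_X-\delta_X$ is genuinely quadratic in $X$, and the paper supplies no further justification (no concentration argument, no fluid limit) for collapsing $E_\pi[(X/N)^2]$ to $\bar u_1(B)^2$. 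Your diagnosis of the gap is therefore accurate, and your proposed remedies would only recover the formula in the large-$N$ regime, not for every fixed $N\ge 2$ as asserted; on this point you are being more careful than the paper itself.

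One place where the paper's argument is tidier than yours: to select the correct root it simply notes $Q(0,1)=c_2>0$ and $Q(1,0)=-c_1<0$, so the degree-two polynomial has a unique zero in $(0,1)$, which is then the globally stable fixed point of the ODE. This is more direct than matching special values of $B$ and invoking continuity, and it avoids any forward reference to the monotonicity of $B\mapsto\bar u_1(B)$ established only afterwards.
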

\begin{proof}
 The communication network being a complete graph, the transition rates~\eqref{eq:rate-labor} are invariant by permutation of the vertices therefore the spatial configuration of individuals is no longer relevant and the
 process~$X_t$ itself is Markov.
 More precisely, since the state can only increase or decrease by one unit at a positive rate, this process is a continuous-time birth and death process, thus characterized by its birth and death parameters
 $$ \begin{array}{rcl}
      \beta_j & := & \lim_{h \to 0} \,(1/h) \,P \,(X_{t + h} = j + 1 \,| \,X_t = j) \vspace*{4pt} \\
     \delta_j & := & \lim_{h \to 0} \,(1/h) \,P \,(X_{t + h} = j - 1 \,| \,X_t = j). \end{array} $$
 By standard properties of independent Poisson processes, the birth parameter~$\beta_j$ is obtained by multiplying the number of individuals performing task~2 by the common rate at which each of these individuals switches to task~1.
 Together with~\eqref{eq:rate-labor}, this gives
\begin{equation}
\label{eq:birth}
  \beta_j = c_2 \,(N - j) \bigg(\ep + (1 - \ep) \bigg(\frac{N - j - 1}{N - 1} \bigg) \bigg) \quad \hbox{for} \quad j = 0, 1, \ldots, N.
\end{equation}
 By symmetry, the death parameters are given by
\begin{equation}
\label{eq:death}
  \delta_j = c_1 \,j \,\bigg(\ep + (1 - \ep) \bigg(\frac{j - 1}{N - 1} \bigg) \bigg) \quad \hbox{for} \quad j = 0, 1, \ldots, N.
\end{equation}
 The next step is to use the transition rates~\eqref{eq:birth}--\eqref{eq:death} to derive a differential equation for the expected value of the fraction of individuals performing a given task at time~$t$, namely
 $$ u_1 (t) := E \,(X_t / N) \quad \hbox{and} \quad u_2 (t) := E \,(1 - X_t / N). $$
 Even though these functions depend on the initial state, their limit as time goes to infinity does not according to Lemma~\ref{lem:general}.
 To compute the limit, we first observe that
 $$ \begin{array}{rcl}
      u_1' & = & \lim_{h \to 0} \ (1/h)(u_1 (t + h) - u_1 (t)) \vspace*{4pt} \\
           & = & \lim_{h \to 0} \ (1/Nh) \,E \,(X_{t + h} - X_t \,| \,X_t = N u_1 (t)) \vspace*{4pt} \\
           & = &  (1/N) \,\lim_{h \to 0} \ (1/h) \,P \,(X_{t + h} = N u_1 (t) + 1 \,| \,X_t = N u_1 (t)) \vspace*{4pt} \\
           &   & \hspace*{25pt} - \ (1/N) \,\lim_{h \to 0} \ (1/h) \,P \,(X_{t + h} = N u_1 (t) - 1 \,| \,X_t = N u_1 (t)) \vspace*{4pt} \\
           & = &  (1/N) \,\beta_{Nu_1 (t)} - (1/N) \,\delta_{Nu_1 (t)}. \end{array} $$
 Using also~\eqref{eq:birth}--\eqref{eq:death}, we deduce that
 $$ \begin{array}{rcl}
      u_1' & = & \displaystyle c_2 \,u_2 \,\bigg(\ep + (1 - \ep) \bigg(1 - \frac{Nu_1}{N - 1} \bigg) \bigg) - c_1 \,u_1 \,\bigg(\ep + (1 - \ep) \bigg(1 - \frac{Nu_2}{N - 1} \bigg) \bigg) \vspace*{8pt} \\
           & = & \displaystyle c_2 \,u_2 - c_1 \,u_1 + (1 - \ep) \bigg(\frac{1}{1 - 1/N} \bigg)(c_1 - c_2) \,u_1 \,u_2 =: Q (u_1, u_2). \end{array} $$
 Recalling that~$B = (1 - \ep)(1 - 1/N)^{-1}$ and~$u_1 + u_2 = 1$, we get
 $$ \begin{array}{rcl}
      Q (u_1, u_2) & = & Q (u_1, 1 - u_1) \vspace*{3pt} \\
                   & = & c_2 \,(1 - u_1) - c_1 \,u_1 + B \,(c_1 - c_2)(1 - u_1) \,u_1 \vspace*{3pt} \\
                   & = & c_2 - (c_1 + c_2) \,u_1 + B \,(c_1 - c_2) \,u_1 - B \,(c_1 - c_2) \,u_1^2. \end{array} $$
 Since this is a polynomial with degree two in~$u_1$ and that
 $$ Q (0, 1) = c_2 > 0 \quad \hbox{and} \quad Q (1, 0) = - c_1 < 0, $$
 we obtain the existence of a unique fixed point in~$(0, 1)$, namely~$\bar u_1 (B)$, which is globally asymptotically stable.
 Basic algebra shows that the discriminant is
 $$ \begin{array}{rcl}
      \Delta & = & (B \,(c_1 - c_2) - (c_1 + c_2))^2 + 4B \,c_2 \,(c_1 - c_2) \vspace*{3pt} \\
             & = &  B^2 \,(c_1 - c_2)^2 + (c_1 + c_2)^2 - 2B \,(c_1 + c_2)(c_1 - c_2) + 4B \,c_2 \,(c_1 - c_2) \vspace*{3pt} \\
             & = &  B^2 \,(c_1 - c_2)^2 + (c_1 + c_2)^2 - 2B \,(c_1 - c_2)^2 \vspace*{3pt} \\
             & = &  B^2 \,(c_1 - c_2)^2 + (c_1 - c_2)^2 - 2B \,(c_1 - c_2)^2 + 4 \,c_1 c_2 \vspace*{3pt} \\
             & = & (B - 1)^2 \,(c_1 - c_2)^2 + 4 \,c_1 c_2 \end{array} $$
 from which it follows that~$u_1 (t) = E \,(X_t / N)$ converges to
 $$ \begin{array}{rcl}
    \bar u_1 (B) & = & \displaystyle \frac{B \,(c_1 - c_2) - (c_1 + c_2)}{2B \,(c_1 - c_2)} - \frac{1}{2B} \,\sqrt{\frac{\Delta}{(c_1 - c_2)^2}} \vspace*{8pt} \\
                 & = & \displaystyle \frac{1}{2} - \frac{1}{2B} \,\bigg(\frac{c_1 + c_2}{c_1 - c_2} + \sqrt{(B - 1)^2 + \frac{4 \,c_1 c_2}{(c_1 - c_2)^2}} \bigg). \end{array} $$
 This completes the proof.
\end{proof} \\ \\
 The first part of Theorem~\ref{th:complete} follows from~Lemmas~\ref{lem:general}--\ref{lem:complete-limit}.
 We now deal with the second part, i.e., we study the function~$\bar u_1 (B)$, which gives some insight into the role of the probability of defection and the size of the colony in the division of labor.
 The monotonicity of this function and its value at biologically relevant values of~$B$ are established in the next four lemmas.
\begin{lemma} --
\label{lem:decrease}
  The function~$B \mapsto \bar u_1 (B)$ is decreasing on~$(0, 2)$.
\end{lemma}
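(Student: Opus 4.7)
The plan is to differentiate the explicit formula~\eqref{eq:complete-limit-0} and show the derivative is strictly negative on the relevant interval. To streamline the computation I would first introduce the shorthand $\alpha := (c_1 + c_2)/(c_2 - c_1)$ and $\beta := 2\sqrt{c_1 c_2}/(c_2 - c_1)$, both of which are strictly positive since $c_1 < c_2$. The elementary identity $(c_1 + c_2)^2 - 4 c_1 c_2 = (c_2 - c_1)^2$ yields the key relation $\alpha^2 = 1 + \beta^2$, and the formula then rewrites as
$$ \bar u_1 (B) = \frac{1}{2} + \frac{\alpha - g(B)}{2B}, \qquad g (B) := \sqrt{(B - 1)^2 + \beta^2}. $$

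Differentiating, one obtains $\bar u_1' (B) = -[\alpha - g(B) + B g'(B)]/(2 B^2)$, so monotonicity reduces to showing $H(B) := g(B) - B g'(B) < \alpha$ for all $B \in (0, 2)$. A direct computation produces $g''(B) = \beta^2 / g(B)^3 > 0$, so $g$ is strictly convex on~$\R$ and hence $H'(B) = g'(B) - g'(B) - B g''(B) = - B g''(B) < 0$ on~$(0, \infty)$, making~$H$ strictly decreasing there. The relation $\alpha^2 = 1 + \beta^2$ delivers the boundary value $H(0) = g(0) = \sqrt{1 + \beta^2} = \alpha$, and combining these two facts yields $H(B) < \alpha$ for every $B > 0$. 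In fact the argument establishes monotonicity on all of~$(0, \infty)$, a slightly stronger statement than the lemma requires, and this is consistent with the theorem's explicit values $\bar u_1 (0^+) > \bar u_1 (1) > \bar u_1 (2) = 1/2$.

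The only genuine obstacle is spotting the reformulation in terms of~$\alpha$ and~$\beta$ with $\alpha^2 - \beta^2 = 1$, which is what makes~$H(0) = \alpha$ an exact identity rather than an inequality requiring uglier algebra. A backup route, should the above computation get tangled, is implicit differentiation of the defining polynomial equation $Q (\bar u_1, 1 - \bar u_1) = 0$: this gives at once
$$ \bar u_1'(B) = \frac{(c_1 - c_2) \,\bar u_1 (1 - \bar u_1)}{(c_1 + c_2) - B (c_1 - c_2)(1 - 2 \bar u_1)}, $$
whose numerator is visibly negative, while the denominator is positive because the upward-opening parabola $u \mapsto Q (u, 1 - u)$ is positive at~$u = 0$ and negative at~$u = 1$ and so must cross zero going downward at its unique root $\bar u_1 \in (0, 1)$; either route therefore confirms $\bar u_1' < 0$.
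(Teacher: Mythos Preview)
Your argument is correct and in fact establishes strict monotonicity on all of $(0,\infty)$, not just $(0,2)$. The paper proceeds differently: it splits into the two cases $B<1$ and $B\ge 1$. For $B<1$ it first uses the inequality $g(B)\le g(0)=\alpha$ (valid on $[0,2]$) to conclude $\bar u_1(B)\ge 1/2$, and then applies implicit differentiation to the quadratic $P(B,X)=0$, checking the signs of $\partial_B P$ and $\partial_X P$ on the region $X\in[1/2,1]$. For $B\ge 1$ it computes $\bar u_1'(B)$ directly and bounds it above by $-(B-1)/(2Bg(B))\le 0$, again invoking $g(B)\le\alpha$. Your route avoids the case split entirely: the substitution $\alpha,\beta$ with $\alpha^2-\beta^2=1$ turns the key inequality into the clean statement $H(B)=g(B)-Bg'(B)<\alpha$, and the convexity of $g$ plus the exact boundary value $H(0)=\alpha$ dispatches it at once. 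What the paper's approach buys is that the $B<1$ half is essentially your backup implicit-differentiation route, so the underlying idea is the same there; what your approach buys is a shorter, unified proof and the slightly stronger conclusion on $(0,\infty)$.
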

\begin{proof}
 To begin with, we observe that
\begin{equation}
\label{eq:decrease-1}
  \begin{array}{l}
   \displaystyle \sqrt{(B - 1)^2 + \frac{4 \,c_1 c_2}{(c_1 - c_2)^2}} \ \leq \ \sqrt{1 + \frac{4 \,c_1 c_2}{(c_1 - c_2)^2}} \vspace*{8pt} \\ \hspace*{50pt} = \
   \displaystyle \sqrt{\bigg(\frac{c_1 - c_2}{c_1 - c_2} \bigg)^2 + \frac{4 \,c_1 c_2}{(c_1 - c_2)^2}} \ = \ - \ \frac{c_1 + c_2}{c_1 - c_2}. \end{array}
\end{equation}
 Then, we distinguish two cases. \vspace*{5pt} \\
 Assume first that~$B < 1$.
 Inequality~\eqref{eq:decrease-1} implies that
 $$ \bar u_1 (B) = \frac{1}{2} - \frac{1}{2B} \bigg(\frac{c_1 + c_2}{c_1 - c_2} \bigg) - \frac{1}{2B} \,\sqrt{(B - 1)^2 + \frac{4 \,c_1 c_2}{(c_1 - c_2)^2}} \geq 1/2 $$
 showing that~$\bar u_1 (B)$ is the unique solution~$X \in [1/2, 1]$ of
 $$ P \,(B, X) = c_2 - (c_1 + c_2) \,X + B \,(c_1 - c_2) \,X - B \,(c_1 - c_2) \,X^2 = 0. $$
 Since in addition, for all~$B \in (0, 1)$ and~$X \in [1/2, 1]$,
 $$ \begin{array}{rcl}
    \partial_B P \,(B, X) & = & (c_1 - c_2) \,X - (c_1 - c_2) \,X^2 = (c_1 - c_2)(1 - X) \,X < 0 \vspace*{5pt} \\
    \partial_X P \,(B, X) & = & - \ (c_1 + c_2) + B \,(c_1 - c_2) - 2B \,(c_1 - c_2) \,X \vspace*{3pt} \\
                          & = & - \ (c_1 + c_2) + B \,(c_1 - c_2)(1 - 2X) \vspace*{3pt} \\
                          & \leq & - \ (c_1 + c_2) - B \,(c_1 - c_2) = - (B + 1) \,c_1 + (B - 1) \,c_2 < 0, \end{array} $$
 the limit~$\bar u_1 (B)$ is decreasing on~$(0, 1)$. \vspace*{5pt} \\
 Assume now that~$B \geq 1$. Since
 $$ \begin{array}{rcl}
     \bar u_1' (B) & = & \displaystyle \frac{1}{2B^2} \,\bigg(\frac{c_1 + c_2}{c_1 - c_2} + \sqrt{(B - 1)^2 + \frac{4 \,c_1 c_2}{(c_1 - c_2)^2}} \bigg) \vspace*{8pt} \\
                   &   & \displaystyle - \ \frac{B - 1}{2B} \bigg/ \sqrt{(B - 1)^2 + \frac{4 \,c_1 c_2}{(c_1 - c_2)^2}} \end{array} $$
 using again~\eqref{eq:decrease-1} gives
 $$ \bar u_1' (B) = - \ \frac{B - 1}{2B} \bigg/ \sqrt{(B - 1)^2 + \frac{4 \,c_1 c_2}{(c_1 - c_2)^2}} \ \leq \ 0 $$
 showing that~$\bar u_1 (B)$ is also decreasing on~$[1, 2)$.
\end{proof} \\ \\
 As explained in the introduction, the value in the limit as~$B \to 0$ and at~$B = 2$ can be understood returning to the stochastic model.
 We now give a proof using the expression~\eqref{eq:complete-limit-0}.
\begin{lemma} --
\label{lem:first-limit}
 We have~$\lim_{B \to 0} \,\bar u_1 (B) = \bar v_1$.
\end{lemma}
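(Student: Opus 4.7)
The plan is a direct limit computation from the closed form~\eqref{eq:complete-limit-0}. Write
$$ f(B) := \frac{c_1 + c_2}{c_1 - c_2} + \sqrt{(B-1)^2 + \frac{4 c_1 c_2}{(c_1 - c_2)^2}}, $$
so that $\bar u_1(B) = \tfrac12 - f(B)/(2B)$. Since the prefactor $1/(2B)$ blows up as $B \to 0$, the first thing to verify is that $f(0) = 0$; otherwise the limit would be infinite.

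The key algebraic identity is
$$ (c_1 - c_2)^2 + 4 c_1 c_2 = (c_1 + c_2)^2, $$
so at $B = 0$ we have
$$ \sqrt{1 + \frac{4 c_1 c_2}{(c_1 - c_2)^2}} = \sqrt{\frac{(c_1 + c_2)^2}{(c_1 - c_2)^2}} = \frac{c_1 + c_2}{c_2 - c_1}, $$
where I used $c_1 < c_2$ to resolve the absolute value. Adding $(c_1+c_2)/(c_1-c_2)$ gives $f(0) = 0$, which is exactly the cancellation observed in the inequality~\eqref{eq:decrease-1} at $B=0$.

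Next I would compute $f'(0)$ to apply L'Hôpital (or equivalently Taylor expand $f$ to first order). Differentiating,
$$ f'(B) = \frac{B - 1}{\sqrt{(B-1)^2 + 4 c_1 c_2 / (c_1-c_2)^2}}, $$
and the denominator at $B=0$ has already been computed to equal $(c_1+c_2)/(c_2-c_1)$. Hence
$$ f'(0) = \frac{-1}{(c_1+c_2)/(c_2-c_1)} = \frac{c_1 - c_2}{c_1 + c_2}. $$

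Combining, $\lim_{B \to 0} f(B)/(2B) = f'(0)/2 = (c_1 - c_2)/(2(c_1+c_2))$, and therefore
$$ \lim_{B \to 0} \bar u_1(B) = \frac{1}{2} - \frac{c_1 - c_2}{2(c_1 + c_2)} = \frac{c_2}{c_1 + c_2} = \bar v_1, $$
which is the desired identity. There is no real obstacle here beyond careful bookkeeping of the sign of $c_1 - c_2$ (which is negative under the standing assumption $c_1 < c_2$); the only substantive ingredient is the identity $(c_1-c_2)^2 + 4c_1 c_2 = (c_1+c_2)^2$ that forces the numerator to vanish at $B=0$ and makes the L'Hôpital step applicable.
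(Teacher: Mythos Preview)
Your proof is correct and follows essentially the same route as the paper: both arguments use the identity $(c_1-c_2)^2 + 4c_1c_2 = (c_1+c_2)^2$ to show the numerator vanishes at $B=0$, then extract the first-order term (the paper via an explicit Taylor expansion, you via L'H\^opital, which are equivalent here) to obtain the limit $c_2/(c_1+c_2)$.
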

\begin{proof}
 Using a Taylor expansion and~\eqref{eq:decrease-1}, we get
 $$ \begin{array}{rcl}
    \displaystyle \sqrt{(B - 1)^2 + \frac{4 \,c_1 c_2}{(c_1 - c_2)^2}} & = & \displaystyle \sqrt{1 + \frac{4 \,c_1 c_2}{(c_1 - c_2)^2}} - B \bigg/ \sqrt{1 + \frac{4 \,c_1 c_2}{(c_1 - c_2)^2}} + o (B) \vspace*{8pt} \\
                                                                       & = & \displaystyle  - \ \frac{c_1 + c_2}{c_1 - c_2} + \frac{c_1 - c_2}{c_1 + c_2} \ B + o (B) \end{array} $$
 when~$B$ is small. In particular,
 $$ \begin{array}{rcl}
    \lim_{B \to 0} \,\bar u_1 (B) & = & 1/2 - \lim_{B \to 0} \ \displaystyle \frac{1}{2B} \,\bigg(\frac{c_1 + c_2}{c_1 - c_2} + \sqrt{(B - 1)^2 + \frac{4 \,c_1 c_2}{(c_1 - c_2)^2}} \bigg) \vspace*{8pt} \\
                                  & = & 1/2 - \lim_{B \to 0} \ \displaystyle \frac{1}{2B} \,\bigg(\frac{c_1 - c_2}{c_1 + c_2} \ B + o (B) \bigg) \vspace*{10pt} \\
                                  & = & \displaystyle \frac{1}{2} - \frac{1}{2} \ \frac{c_1 - c_2}{c_1 + c_2} = \frac{c_2}{c_1 + c_2} = \bar v_1. \end{array} $$
 This completes the proof.
\end{proof}
\begin{lemma} --
\label{lem:second-limit}
 We have~$\bar u_1 (1) = \sqrt{c_2} \ (\sqrt{c_1} + \sqrt{c_2})^{-1}$.
\end{lemma}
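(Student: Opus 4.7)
The plan is to simply substitute $B = 1$ into the explicit formula~\eqref{eq:complete-limit-0} obtained in Lemma~\ref{lem:complete-limit} and to simplify algebraically. At $B=1$, the term $(B-1)^2$ inside the square root vanishes, so the argument of the square root reduces to $4c_1c_2/(c_1-c_2)^2$, whose nonnegative square root is $2\sqrt{c_1c_2}/|c_1-c_2|$. Under the standing assumption $c_1 < c_2$ (stated at the beginning of Section~\ref{sec:results}), we have $|c_1-c_2| = c_2 - c_1$, so
$$ \bar u_1 (1) \ = \ \frac{1}{2} - \frac{1}{2} \bigg(\frac{c_1 + c_2}{c_1 - c_2} + \frac{2 \sqrt{c_1 c_2}}{c_2 - c_1} \bigg) \ = \ \frac{1}{2} + \frac{1}{2} \ \frac{c_1 + c_2 - 2 \sqrt{c_1 c_2}}{c_2 - c_1}. $$

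The key observation is then to recognize the two algebraic identities $c_1 + c_2 - 2 \sqrt{c_1 c_2} = (\sqrt{c_2} - \sqrt{c_1})^2$ and $c_2 - c_1 = (\sqrt{c_2} - \sqrt{c_1})(\sqrt{c_2} + \sqrt{c_1})$. Substituting these into the fraction above produces a common factor of $\sqrt{c_2} - \sqrt{c_1}$ that cancels, leaving
$$ \bar u_1 (1) \ = \ \frac{1}{2} + \frac{\sqrt{c_2} - \sqrt{c_1}}{2 \,(\sqrt{c_2} + \sqrt{c_1})} \ = \ \frac{\sqrt{c_2}}{\sqrt{c_1} + \sqrt{c_2}}, $$
which is the desired expression. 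There is no conceptual obstacle here, and not even a real computational one: the whole proof is a two-line substitution followed by the factorization $(\sqrt{c_2}-\sqrt{c_1})^2/(c_2-c_1) = (\sqrt{c_2}-\sqrt{c_1})/(\sqrt{c_2}+\sqrt{c_1})$. If anything, the only place to be careful is keeping track of the sign of $c_1 - c_2$ when taking the square root, since the formula~\eqref{eq:complete-limit-0} writes $(c_1+c_2)/(c_1-c_2)$ rather than $-(c_1+c_2)/(c_2-c_1)$, and the two summands inside the large parentheses must combine with consistent signs in order for the final value to actually lie in the interval $(1/2, \bar v_1)$ as asserted in Theorem~\ref{th:complete}.
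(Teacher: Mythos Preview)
Your proof is correct and follows essentially the same approach as the paper: substitute $B=1$ into~\eqref{eq:complete-limit-0} and simplify algebraically. The only cosmetic difference is that the paper first collapses $\tfrac{1}{2} - \tfrac{1}{2}(c_1+c_2)/(c_1-c_2)$ to $-c_2/(c_1-c_2)$ and then factors out $\sqrt{c_2}$, whereas you combine the two summands over the common denominator $c_2 - c_1$ and recognize the perfect square $(\sqrt{c_2}-\sqrt{c_1})^2$; both routes are equivalent two-line computations.
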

\begin{proof}
 Taking~$B = 1$ in the expression~\eqref{eq:complete-limit-0}, we get
 $$ \begin{array}{rcl}
    \bar u_1 (1) & = & \displaystyle \frac{1}{2} - \frac{1}{2} \bigg(\frac{c_1 + c_2}{c_1 - c_2} \bigg) - \frac{1}{2} \,\sqrt{\frac{4 \,c_1 c_2}{(c_1 - c_2)^2}} \vspace*{8pt} \\
                 & = & \displaystyle - \ \frac{c_2}{c_1 - c_2} - \sqrt{\frac{c_1 c_2}{(c_1 - c_2)^2}} = \frac{\sqrt{c_2} \,(\sqrt{c_2} - \sqrt{c_1})}{c_2 - c_1} = \frac{\sqrt{c_2}}{\sqrt{c_2} + \sqrt{c_1}}. \end{array} $$
 This completes the proof.
\end{proof}
\begin{lemma} --
\label{lem:third-limit}
 We have~$\bar u_1 (2) = 1/2$.
\end{lemma}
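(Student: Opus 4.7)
The plan is to simply substitute $B = 2$ directly into the closed-form expression~\eqref{eq:complete-limit-0} and exploit the identity already recorded in~\eqref{eq:decrease-1}. At $B = 2$, one has $(B - 1)^2 = 1$, so the radical inside~\eqref{eq:complete-limit-0} becomes
$$ \sqrt{1 + \frac{4 \,c_1 c_2}{(c_1 - c_2)^2}} = \sqrt{\frac{(c_1 - c_2)^2 + 4 \,c_1 c_2}{(c_1 - c_2)^2}} = \sqrt{\frac{(c_1 + c_2)^2}{(c_1 - c_2)^2}}, $$
which, since $c_1 < c_2$ forces $c_1 - c_2 < 0$, equals $-(c_1 + c_2)/(c_1 - c_2)$. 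This is precisely the identity used in~\eqref{eq:decrease-1}.

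Substituting this into~\eqref{eq:complete-limit-0} with $B = 2$ gives
$$ \bar u_1 (2) = \frac{1}{2} - \frac{1}{4} \,\bigg(\frac{c_1 + c_2}{c_1 - c_2} - \frac{c_1 + c_2}{c_1 - c_2} \bigg) = \frac{1}{2}, $$
so the computation reduces to a one-line cancellation of two equal and opposite terms. There is no real obstacle here; the only subtle point is keeping track of the sign coming from $c_1 < c_2$, which is what makes the positive square root cancel the negative first summand.

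As a sanity check, the value $B = 2$ corresponds to $\ep = 0$ and $N = 2$, a situation one can analyze directly: with perfect cooperation on two vertices, each configuration in which the two individuals perform different tasks is absorbing, and from the two configurations in which they perform the same task the chain moves in one jump to one of these absorbing states with equal probability. Hence the long-run fraction of individuals performing task~$1$ is exactly $1/2$, in agreement with the calculation above.
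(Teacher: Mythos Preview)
Your proof is correct and follows essentially the same approach as the paper: substitute $B=2$ into~\eqref{eq:complete-limit-0}, use the identity from~\eqref{eq:decrease-1} to evaluate the square root as $-(c_1+c_2)/(c_1-c_2)$, and observe the resulting cancellation. The sanity check you append about the absorbing states for $N=2$, $\ep=0$ is additional commentary not present in the paper's proof, but it is accurate and matches the heuristic the paper gives when discussing Theorem~\ref{th:complete}.
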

\begin{proof}
 Using~\eqref{eq:decrease-1} once more, we get
 $$ \begin{array}{rcl}
    \bar u_1 (2) & = & \displaystyle \frac{1}{2} - \frac{1}{4} \,\bigg(\frac{c_1 + c_2}{c_1 - c_2} + \sqrt{1 + \frac{4 \,c_1 c_2}{(c_1 - c_2)^2}} \bigg) \vspace*{8pt} \\
                 & = & \displaystyle \frac{1}{2} - \frac{1}{4} \,\bigg(\frac{c_1 + c_2}{c_1 - c_2} - \frac{c_1 + c_2}{c_1 - c_2} \bigg) = 1/2. \end{array} $$
 This completes the proof.
\end{proof} \\ \\
 The combination of Lemmas~\ref{lem:decrease}--\ref{lem:third-limit} gives the second part of the theorem.

%%%%%%%%%%%%%%%%%%%%%%%%%%%%%%%%%%%%%%%%%%%%%%%%%%%%%%%%%%%%%%%%%%%%%%%%%%%%%%%%%%%%%%%%%%%%%%%%%%%%%%%%%%%%%%%%%%%%%%%%%%%%%%%%%%%%%%%%%%

\section{Proof of Theorem~\ref{th:bipartite} (bipartite graphs)}
\label{sec:bipartite}

\indent This section is devoted to the proof of Theorem~\ref{th:bipartite}.
 Throughout this section, we assume that the communication network is a finite, connected, bipartite graph.
 Recall that a graph is bipartite when there exists a partition~$\{V_1, V_2 \}$ of the vertex set such that
\begin{equation}
\label{eq:bipartite}
  (x, y) \in E \quad \hbox{implies that} \quad (x, y) \in V_1 \times V_2 \quad \hbox{or} \quad (x, y) \in V_2 \times V_1.
\end{equation}
 First, we prove that, when~$\ep = 0$, the two configurations
 $$ \xi_+ := \ind_{V_1} + 2 \times \ind_{V_2} \quad \hbox{and} \quad \xi_- := \ind_{V_2} + 2 \times \ind_{V_1} $$
 are the only two absorbing states, from which the theorem easily follows when~$\ep = 0$.
 Note that these configurations are simply the configurations in which all the vertices in the same bipartite set perform the same task and all the individuals in the other bipartite set perform the other task.
 To deal with positive defection probability, in which case the process becomes irreducible and converges weakly to a unique stationary distribution according to the proof of Lemma~\ref{lem:general}, the key ingredient is to prove that the
 fraction of time the process spends in one of the two configurations~$\xi_{\pm}$ in the long run can be made arbitrarily close to one by choosing~$\ep > 0$ small enough.
\begin{lemma} --
\label{lem:bipartite-absorbing}
 Let~$\ep = 0$.
 Then~$\xi_-$ and~$\xi_+$ are the only two absorbing states.
\end{lemma}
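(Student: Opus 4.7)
The plan is to show that, when $\ep=0$, a configuration is absorbing if and only if it is a proper $2$-coloring of $G$, and then to invoke connectedness to conclude that only $\xi_+$ and $\xi_-$ qualify. With $\ep=0$, the transition rate at vertex $x$ in configuration $\xi$ with $\xi(x)=i$ reduces from \eqref{eq:rate-labor} to $c_i\,(1-f_j(x,\xi))$ with $\{i,j\}=\{1,2\}$, which vanishes exactly when every neighbor of $x$ performs the opposite task. Since $G$ is connected and bipartite with both $V_1,V_2$ nonempty, every vertex has at least one neighbor, so the special convention for isolated vertices plays no role here.

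First I will verify the easy direction: $\xi_\pm$ are absorbing. For $\xi_+$, every vertex $x\in V_1$ has task $1$, and by \eqref{eq:bipartite} all its neighbors lie in $V_2$, where they perform task $2$; hence $f_2(x,\xi_+)=1$ at each $x\in V_1$, and symmetrically $f_1(x,\xi_+)=1$ at each $x\in V_2$. All rates therefore vanish, showing that $\xi_+$ is absorbing, and the argument for $\xi_-$ is identical.

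For the converse I will argue by contradiction. Assume $\xi$ is absorbing, and suppose there existed an edge $(x,y)\in E$ with $\xi(x)=\xi(y)=i$. Then at vertex $x$ the neighbor $y$ would contribute to $f_i(x,\xi)$ but not to $f_j(x,\xi)$, so $f_j(x,\xi)<1$ and the rate $c_i\,(1-f_j(x,\xi))$ would be strictly positive, contradicting absorption. Therefore $\xi$ must assign distinct tasks to the endpoints of every edge, i.e., the preimages $\xi^{-1}(1)$ and $\xi^{-1}(2)$ form a bipartition of the connected graph $G$. Invoking the standard fact that a connected bipartite graph has a unique bipartition (proved, e.g., by fixing a root and alternating labels along a spanning tree), this bipartition must coincide with $\{V_1,V_2\}$, so $\xi=\xi_+$ or $\xi=\xi_-$.

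The main obstacle, such as it is, lies in this last step: one must use connectedness to exclude configurations that are proper $2$-colorings of $G$ but assign task $1$ to part of $V_1$ and part of $V_2$. As this is a routine structural property of connected bipartite graphs, no delicate analytic estimates are required for this lemma.
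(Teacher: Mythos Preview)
Your proof is correct and follows essentially the same approach as the paper's: both arguments reduce absorption under $\ep=0$ to the condition that no two adjacent vertices share a task, and then use connectedness to conclude that the only such configurations are $\xi_\pm$. The only cosmetic difference is that the paper spells out the uniqueness of the bipartition via an explicit even-length path argument (pick $x,y$ in the same bipartite set with different tasks, connect them, and find an edge whose endpoints agree), whereas you invoke it as a standard structural fact about connected bipartite graphs; these are the same argument in different packaging.
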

\begin{proof}
 Since~$\ep = 0$ and~$G$ is connected, \eqref{eq:rate-labor} becomes
 $$ \begin{array}{rcl}
      1 \ \to \ 2 & \hbox{at rate} & c_1 \,(1 - f_2 (x, \xi)) = c_1 \,f_1 (x, \xi) \vspace*{2pt} \\
      2 \ \to \ 1 & \hbox{at rate} & c_2 \,(1 - f_1 (x, \xi)) = c_2 \,f_2 (x, \xi). \end{array} $$
 It follows that~$\xi$ is an absorbing state if and only if
\begin{equation}
\label{eq:bipartite-absorbing-1}
  f_i (x, \xi) = 0 \quad \hbox{for all} \quad x \in V \quad \hbox{such that} \quad \xi (x) = i
\end{equation}
 for~$i = 1, 2$, indicating that no two neighbors perform the same task.
 In view of~\eqref{eq:bipartite}, the two configurations~$\xi_-$ and~$\xi_+$ clearly satisfy this property so they are absorbing states.
 To prove that there are no other absorbing states, fix~$\xi$ different from both~$\xi_{\pm}$.
 In such a configuration, there are two individuals in the same bipartite set that are performing different tasks.
 By obvious symmetry, we may assume without loss of generality that
\begin{equation}
\label{eq:bipartite-absorbing-2}
  \xi (x) \neq \xi (y) \quad \hbox{for some} \quad x, y \in V_1.
\end{equation}
 Now, since the graph is connected and~\eqref{eq:bipartite} holds, there exists a path with an even number of edges connecting vertex~$x$ and vertex~$y$, i.e., there exist~$x_0, x_1, \ldots, x_{2n}$ such that
\begin{equation}
\label{eq:bipartite-absorbing-3}
  x_0 = x \quad \hbox{and} \quad x_{2n} = y \quad \hbox{and} \quad (x_0, x_1), (x_1, x_2), \ldots, (x_{2n - 1}, x_{2n}) \in E.
\end{equation}
 It follows from~\eqref{eq:bipartite-absorbing-2}--\eqref{eq:bipartite-absorbing-3} that
 $$ \xi (x_j) = \xi (x_{j + 1}) \quad \hbox{for some} \quad j = 0, 1, \ldots, 2n - 1, $$
 showing that there are two neighbors performing the same task.
 In particular, \eqref{eq:bipartite-absorbing-1} fails, which implies that configuration~$\xi$ is not an absorbing state.
\end{proof} \\ \\
 It follows from Lemma~\ref{lem:bipartite-absorbing} and the fact that the graph is connected that, when~$\ep = 0$, the process has exactly three communication classes, namely
 $$ C_- := \{\xi_- \} \qquad C_0 := \{\xi \in \{1, 2 \}^V : \xi \notin \{\xi_-, \xi_+ \} \} \qquad C_+ := \{\xi_+ \}. $$
 The classes~$C_-$ and~$C_+$ are closed but~$C_0$ is not.
 Since in addition the graph is finite, the process gets trapped eventually in one of its two absorbing states. \\
\indent By the proof of Lemma~\ref{lem:general}, when~$\ep > 0$, there is a unique stationary distribution~$\pi$ to which the process converges weakly starting from any initial state.
 To establish the theorem in this case, the idea is to prove that the fraction of time spent in~$\xi_{\pm}$ can be made arbitrarily large by choosing the defection probability sufficiently small.
 Using the stationary distribution~$\pi$, this statement can be expressed as follows:
 for all~$\rho > 0$, there exists~$\ep_0 > 0$ such that
\begin{equation}
\label{eq:bipartite-2}
  P_{\pi} (\xi = \xi_+) + P_{\pi} (\xi = \xi_-) > 1 - \rho \quad \hbox{for all} \quad \ep < \ep_0.
\end{equation}
 To show~\eqref{eq:bipartite-2}, we define the stopping times
 $$ \begin{array}{rcl}
      T_{in} & := & \inf \,\{t : \xi_t \in \{\xi_-, \xi_+ \} \} = \inf \,\{t : \xi_t \notin C_0 \} \vspace*{4pt} \\
     T_{out} & := & \inf \,\{t : \xi_t \notin \{\xi_-, \xi_+ \} \} = \inf \,\{t : \xi_t \in C_0 \} \end{array} $$
 and give upper/lower bounds for the expected values
 $$ \begin{array}{rcl}
     \tau_{in} & := & \sup_{\ep, \xi_0} \,E \,(T_{in} \,| \,\xi_0 \notin \{\xi_-, \xi_+ \}) = \sup_{\ep, \xi_0} \,E \,(T_{in} \,| \,\xi_0 \in C_0) \vspace*{4pt} \\
    \tau_{out} & := & \inf_{\xi_0} \,E \,(T_{out} \,| \,\xi_0 \in \{\xi_-, \xi_+ \}) = \inf_{\xi_0} \,E \,(T_{out} \,| \,\xi_0 \notin C_0). \end{array} $$
 This is done in the next two lemmas.
\begin{lemma} --
\label{lem:bipartite-in}
 We have~$\tau_{in} < \infty$.
\end{lemma}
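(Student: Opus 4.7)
The plan is to write $E_{\xi_0}(T_{in})$ as the solution of a linear system depending continuously on $\ep$ on the compact interval $[0,1]$, and then conclude by compactness. The key new input, beyond what has been used in the section so far, is a uniform lower bound on the rate of leaving any configuration in $C_0$, valid even at $\ep = 0$.

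First, I verify that $E_{\xi_0}(T_{in}) < \infty$ at every fixed $(\ep, \xi_0) \in [0,1] \times C_0$. For $\ep > 0$, the argument at the start of the proof of Lemma~\ref{lem:general} gives that the chain is irreducible on its finite state space, so the hitting time of the non-empty set $\{\xi_-, \xi_+\}$ has finite expectation. For $\ep = 0$, Lemma~\ref{lem:bipartite-absorbing} together with the paragraph following it identifies only three communication classes $C_-, C_0, C_+$, of which $C_-$ and $C_+$ are closed, so $C_0$ is transient and the expected hitting time of $\{\xi_-, \xi_+\}$ from any $\xi \in C_0$ is finite by standard finite-state Markov chain theory.

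Next, a one-step analysis gives the matrix equation $(I - P(\ep))\,h(\ep) = g(\ep)$ on $C_0$, where $h(\ep, \xi) = E_\xi(T_{in})$, $g(\ep, \xi) = 1/q(\ep, \xi)$, and $P(\ep; \xi, \xi') = q(\ep; \xi, \xi')/q(\ep, \xi)$ is the embedded kernel restricted to transitions within $C_0$. The crucial estimate is that $q(\ep, \xi) \geq c_1/N$ uniformly over $(\ep, \xi) \in [0,1] \times C_0$: Lemma~\ref{lem:bipartite-absorbing} produces, in any $\xi \in C_0$, a vertex $x$ with at least one same-task neighbor, so $f_{\xi(x)}(x, \xi) \geq 1/\card(N_x) \geq 1/N$, and its flip rate $c_{\xi(x)}(\ep + (1 - \ep) f_{\xi(x)}(x, \xi))$ is a convex combination of $c_{\xi(x)}$ and $c_{\xi(x)} f_{\xi(x)}(x, \xi) \geq c_1/N$, hence at least $c_1/N$ for every $\ep \in [0,1]$. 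In particular, $P(\ep)$ and $g(\ep)$ are continuous functions of $\ep$ on $[0,1]$ and $g(\ep)$ is uniformly bounded.

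The pointwise finiteness of $h(\ep)$ established in the first step forces $I - P(\ep)$ to be invertible at every $\ep$, so $\det(I - P(\ep))$ is a continuous nowhere-vanishing function on the compact interval $[0,1]$ and is thereby bounded away from zero. Continuity of matrix inversion then gives that $h(\ep) = (I - P(\ep))^{-1} g(\ep)$ is continuous on $[0,1]$, and taking the supremum over $\ep \in [0,1]$ and the maximum over the finite set $C_0$ yields $\tau_{in} < \infty$. The main obstacle is the corner $\ep = 0$: there irreducibility fails and the naive lower bound $q(\ep, \xi) \geq \ep\,c_1$ degenerates, and it is precisely the bipartite combinatorics of Lemma~\ref{lem:bipartite-absorbing} that restore a uniform positive lower bound on the exit rate from $C_0$ by replacing $\ep$ with $1/N$.
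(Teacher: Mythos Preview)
Your proof is correct and rests on the same key estimate as the paper: from any $\xi \in C_0$ there is a vertex with a same-task neighbor, so some flip rate is at least $c_1/N$ uniformly in $\ep \in [0,1]$. Where you diverge is in how you extract the uniform bound on $E_\xi(T_{in})$. The paper concludes in one line from this rate bound together with ``$C_0$ finite and not closed'', implicitly using that every transition available at $\ep = 0$ retains rate at least $c_1/N$ for all $\ep$, so a fixed escape path from each $\xi$ succeeds with probability bounded below uniformly in $\ep$. You instead encode the hitting times as $h(\ep) = (I - P(\ep))^{-1} g(\ep)$ and argue by continuity on the compact interval $[0,1]$, having first checked pointwise finiteness to guarantee invertibility. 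Your route is a bit more elaborate but makes the uniformity step fully explicit and avoids having to trace specific escape paths; the paper's version is shorter but leaves more to the reader. Both ultimately depend on the bipartite combinatorics of Lemma~\ref{lem:bipartite-absorbing} in exactly the same way.
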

\begin{proof}
 Fix~$\xi \in C_0$.
 Then~\eqref{eq:bipartite-absorbing-1} does not hold so, when the system is in configuration~$\xi$, there exist two neighbors~$x, y$ performing the same task, say task~$i$.
 In particular, the rate at which vertex~$x$ switches task is bounded from below by
 $$ c_i \,(\ep + (1 - \ep) \,f_i (x, \xi)) \geq \,c_1 \,(\ep + (1/N)(1 - \ep)) \geq c_1 / N > 0. $$
 Since this lower bound does not depend on~$\ep$ and since the communication class~$C_0$ is finite and not closed, we deduce that
 $$ \begin{array}{l} c (\xi) := \sup_{\ep} \,E \,(T_{in} \,| \,\xi_0 = \xi) < \infty \quad \hbox{for all} \quad \xi \in C_0. \end{array} $$
 Using again that~$C_0$ is finite, we conclude that
 $$ \begin{array}{l} \tau_{in} = \sup_{\xi \in C_0} \,c (\xi) < \infty. \end{array} $$
 This completes the proof.
\end{proof}
\begin{lemma} --
\label{lem:bipartite-out}
  We have~$\tau_{out} \geq (\ep N c_2)^{-1} $.
\end{lemma}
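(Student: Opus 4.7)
The plan is to observe that, starting from either $\xi_+$ or $\xi_-$, the process can leave the set $\{\xi_-, \xi_+\}$ only by making a single-vertex transition (since flipping one vertex of $\xi_+$ cannot produce $\xi_-$), and to bound the total rate of that transition using the fact that the bipartite configurations have no pair of same-task neighbors.

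First I would compute the flip rate at an individual vertex $x$ when the current configuration is $\xi_+$ (the case $\xi_-$ is symmetric). Suppose $x \in V_1$, so $\xi_+(x) = 1$; by bipartiteness~\eqref{eq:bipartite}, every neighbor of $x$ lies in $V_2$ and therefore performs task~$2$. Hence $f_2(x, \xi_+) = 1$, so by~\eqref{eq:rate-labor} the rate of the transition $1 \to 2$ at $x$ equals $c_1(\ep + (1-\ep)\cdot 0) = \ep c_1$. The same computation for $x \in V_2$ gives a flip rate $\ep c_2$ at $x$.

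Next I would sum the flip rates over all vertices. The total rate at which the process leaves $\xi_+$ is $\ep(N_1 c_1 + N_2 c_2) \leq \ep N c_2$, and the analogous bound holds at $\xi_-$. Because the process is a continuous-time Markov chain, the first jump time starting from $\xi_\pm$ is exponentially distributed with parameter equal to this total exit rate. Since leaving $\xi_+$ by a single flip lands the process in $C_0$ (and symmetrically for $\xi_-$), this first jump time coincides with $T_{out}$. Therefore
\[
  E(T_{out} \mid \xi_0 = \xi_\pm) \geq (\ep N c_2)^{-1},
\]
and taking the infimum over the two possible starting configurations yields $\tau_{out} \geq (\ep N c_2)^{-1}$.

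There is no real obstacle here: the only thing to check carefully is that a single flip from $\xi_+$ or $\xi_-$ indeed produces a configuration in $C_0$ rather than jumping directly between the two absorbing states of the $\ep=0$ dynamics, which is immediate because $\xi_+$ and $\xi_-$ differ at every vertex of $V$ (and $|V| \geq 2$ since $G$ is connected and bipartite with both bipartite sets nonempty whenever $E \neq \emptyset$; the degenerate cases are trivial).
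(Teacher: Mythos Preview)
Your argument is correct and is essentially the same as the paper's: both compute the per-vertex flip rate in $\xi_\pm$ using that every neighbor performs the opposite task, bound the total exit rate by $\ep N c_2$, and conclude via the exponential holding time. Your version is in fact a bit more explicit, giving the exact total rate $\ep(N_1 c_1 + N_2 c_2)$ before bounding and noting carefully that a single flip lands in $C_0$ rather than in the other absorbing configuration.
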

\begin{proof}
 Intuitively, the result follows from the fact that the transition rates of the process are continuous with respect to~$\ep$ and the fact that, when~$\ep = 0$, the expected time~$\tau_{out}$ is infinite because the
 process cannot leave its absorbing states.
 To make this precise, let~$x$ be a vertex performing say task~$i$.
 Then, according to~\eqref{eq:bipartite-absorbing-1}, the rate at which this vertex switches its task given that the system is in one of the two configurations~$\xi_{\pm}$ is bounded by
 $$ c_i \,(\ep + (1 - \ep) \,f_i (x, \xi_{\pm})) = \ep \,c_i \leq \ep \,c_2. $$
 Since there are~$N$ vertices, standard properties of independent Poisson processes imply that the rate at which the process jumps in the set~$C_0$ is bounded by
 $$ \begin{array}{l} \lim_{h \to 0} \,(1/h) \,P \,(\xi_{t + h} \in C_0 \,| \,\xi_t \notin C_0) \leq \ep N c_2. \end{array} $$
 In conclusion, we have
 $$ \begin{array}{l} \tau_{out} = \inf_{\xi_0} \,E \,(T_{out} \,| \,\xi_0 \notin C_0) \geq E \,(\exponential (\ep N c_2)) = (\ep N c_2)^{-1} \end{array} $$
 which completes the proof.
\end{proof} \\ \\
 With Lemmas~\ref{lem:bipartite-absorbing}--\ref{lem:bipartite-out}, we are now ready to prove the theorem. \\ \\
\begin{demo}{Theorem~\ref{th:bipartite}} --
 The result is obvious for~$\ep = 0$.
 Indeed, in this case, Lemma~\ref{lem:bipartite-absorbing} implies that the process gets trapped in one of its two absorbing states therefore
 $$ \begin{array}{l}
    \lim_{s \to \infty} \,\phi (s) = \min (N_1/N, N_2/N) \,\ind \{\xi_t = \xi_+ \ \hbox{for some} \ t \} \vspace*{4pt} \\ \hspace*{100pt}
                                      + \ \max (N_1/N, N_2/N) \,\ind \{\xi_t = \xi_- \ \hbox{for some} \ t \}. \end{array} $$
 In particular, the limit belongs to
 $$ \begin{array}{l}
    \{\min (N_1/N, N_2/N), \max (N_1/N, N_2/N) \} \vspace*{4pt} \\
    \hspace*{40pt} \subset ((1 - \rho) \min (N_1/N, N_2/N), (1 + \rho) \max (N_1/N, N_2/N)). \end{array} $$
 Now, assume that~$\ep > 0$.
 Then, according to the proof of Lemma~\ref{lem:general}, the process converges weakly to a unique stationary distribution~$\pi$ so standard Markov chain theory implies that the fraction of time the process spends in one of
 the configurations~$\xi_{\pm}$ converges almost surely to
\begin{equation}
\label{eq:bipartite-fraction-1}
  \pi (\xi_-) + \pi (\xi_+) = P_{\pi} (\xi_t \notin C_0) \geq \tau_{out} \,(\tau_{in} + \tau_{out})^{-1}.
\end{equation}
 But according to Lemmas~\ref{lem:bipartite-in}--\ref{lem:bipartite-out}, there exists~$\ep_0 > 0$ such that
\begin{equation}
\label{eq:bipartite-fraction-2}
  \tau_{out} \,(\tau_{in} + \tau_{out})^{-1} \geq (1 + \ep N c_2 \,\tau_{in}) \geq 1 - \rho \quad \hbox{for all} \quad \ep < \ep_0.
\end{equation}
 Combining~\eqref{eq:bipartite-fraction-1}--\eqref{eq:bipartite-fraction-2} together with~\eqref{eq:general}, we get
 $$ \begin{array}{rcl}
    \lim_{s \to \infty} \,\phi (s) & \geq & \min (N_1/N, N_2/N) \,P_{\pi} (\xi_t \notin C_0) \vspace*{4pt} \\
                                   & \geq & (1 - \rho) \min (N_1/N, N_2/N). \end{array} $$
 Similarly, since~$\max (N_1/N, N_2/N) \geq 1/2$, we have
 $$ \begin{array}{rcl}
    \lim_{s \to \infty} \,\phi (s) & \leq & P_{\pi} (\xi_t \in C_0) + \max (N_1/N, N_2/N) \,P_{\pi} (\xi_t \notin C_0) \vspace*{4pt} \\
                                   & \leq & \rho + (1 - \rho) \max (N_1/N, N_2/N) \vspace*{4pt} \\
                                   & \leq & (1 + \rho) \max (N_1/N, N_2/N). \end{array} $$
 This completes the proof of the theorem.
\end{demo}

%%%%%%%%%%%%%%%%%%%%%%%%%%%%%%%%%%%%%%%%%%%%%%%%%%%%%%%%%%%%%%%%%%%%%%%%%%%%%%%%%%%%%%%%%%%%%%%%%%%%%%%%%%%%%%%%%%%%%%%%%%%%%%%%%%%%%%%%%%

\section{Proof of Theorem~\ref{th:lattice} (one-dimensional lattice)}
\label{sec:lattice}

\indent The final communication network we study is the one-dimensional integer lattice.
 Throughout this section, we assume that the initial configuration of the process is a Bernoulli product measure.
 The main objective is to prove that the limiting probability
\begin{equation}
\label{eq:limit-lattice}
  \begin{array}{l} \lim_{s \to \infty} \,P \,(\xi_s (x) = \xi_s (x + 1)) \quad \hbox{where} \quad x \in \Z \end{array}
\end{equation}
 can be made arbitrarily close to zero by choosing~$\ep > 0$ sufficiently small.
 Note that, since both the initial distribution and the evolution rules are translation invariant, this limiting probability does not depend on the particular choice of vertex~$x$.
 The theorem will follow from our estimate of the limit in~\eqref{eq:limit-lattice} and the ergodic theorem.

\indent The process on the one-dimensional lattice, or any (infinite) graph whose degree is uniformly bounded, can be constructed using the following rules and collections of independent Poisson processes,
 called a graphical representation~\cite{harris_1972}:
 for each oriented edge~$(x, y) \in E$,
\begin{itemize}
 \item we draw a solid arrow~$y \longrightarrow x$ at the times of a Poisson process with rate~$(1 - \ep) \,c_1$ to indicate that vertex~$x$ anti-imitates vertex~$y$, \vspace*{3pt}
 \item we draw a dashed arrow~$y \dasharrow x$ at the times of a Poisson process with rate~$(1 - \ep)(c_2 - c_1)$ to indicate that if vertex~$x$ performs task~2 then it anti-imitates vertex~$y$. \vspace*{3pt}
 \item we put a dot~$\bullet$ at vertex~$x$ at the times of a Poisson process with rate~$\ep \,c_1$ to indicate that vertex~$x$ switches task, \vspace*{3pt}
 \item we put a cross~$\times$ at vertex~$x$ at the times of a Poisson process with rate~$\ep \,(c_2 - c_1)$ to indicate that if vertex~$x$ performs task~2 then it switches task.
\end{itemize}
 Since our objective is to compare the tasks performed by neighbors, rather than studying the dynamics of tasks on the set of vertices, it is more convenient to keep track of the dynamics of agreements along the edges.
 To do this, we put a type~$i$ particle on edge~$e$ if and only if the two individuals connected by this edge perform the same task~$i$.
 Returning to the special case of the one-dimensional lattice, this results in a process~$\zeta_t$ defined as
 $$ \zeta_t ((x, y)) := \ind \{\xi (x) = \xi (y) = 1 \} + 2 \times \ind \{\xi (x) = \xi (y) = 2 \} $$
 for all~$x = y \pm 1 \in \Z$.
 In particular, edges in state zero, that we call empty edges, connect individuals performing different tasks.
 The process~$\zeta_t$ is not Markov because the configuration in which all the edges are empty can result in two different configurations of tasks on the vertices.
 It can be proved, however, that the pair~$(\zeta_t, \xi_t (0))$ is a Markov process, but since this is not relevant to establish our theorem, we do not show this result.
 To understand the dynamics on the edges, we now look at the effect of the graphical representation on the configuration of particles.
\begin{itemize}
 \item Solid arrows~$y \longrightarrow x$ only affect the task at~$x$, and so the state of the two edges incident to~$x$, if and only if both vertices~$x$ and~$y$ perform the same task before the interaction.
  This leads to the two cases illustrated on the left-hand side of Figure~\ref{fig:coupling} as well as the additional two cases obtained by exchanging the roles of task~1 and task~2. \vspace*{3pt}
 \item Dashed arrows have the same effect as solid arrows but only if both vertices~$x$ and~$y$ perform task~2 before the interaction. \vspace*{3pt}
 \item Similarly, dots at vertex~$x$ only affect the task at~$x$, and so only the state of the two edges incident to vertex~$x$.
  This leads to the four cases illustrated on the right-hand side of Figure~\ref{fig:coupling} as well as the additional four cases obtained by exchanging the roles of the tasks. \vspace*{3pt}
 \item Crosses have the same effect as dots only if~$x$ performs task~2 before the interaction.
\end{itemize}
\begin{figure}[t]
\centering
\includegraphics[width=0.75\textwidth]{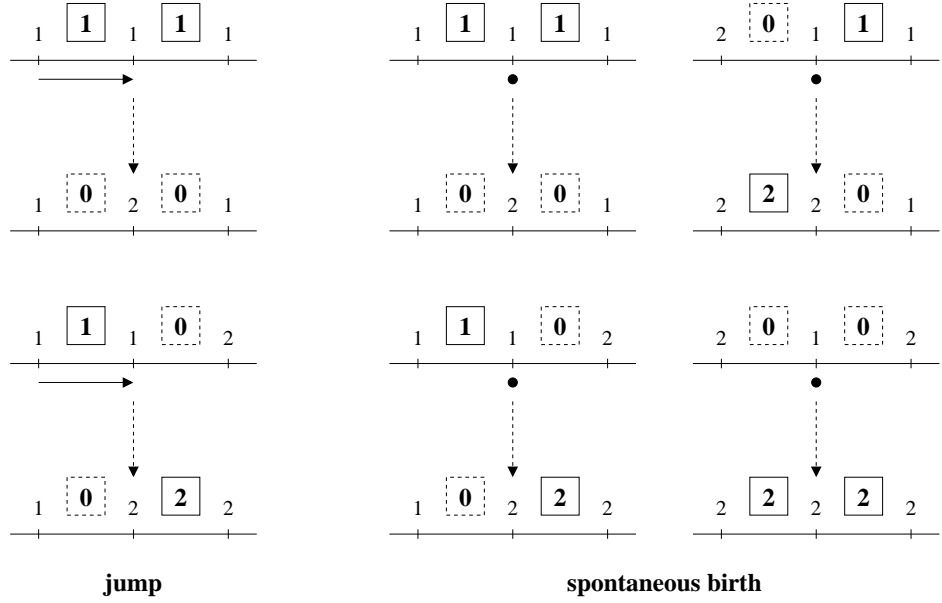}
\caption{\upshape{Coupling with the system of type~1 and type~2 particles.}}
\label{fig:coupling}
\end{figure}
 In view of the transitions in Figure~\ref{fig:coupling} and the rates of the Poisson processes used in the graphical representation, the dynamics on the edges can be summarized as follows.
\begin{itemize}
 \item Type~$i$ particles jump at rate~$(1 - \ep) \,c_i$ to the right or to the left with probability one half and change their type at each jump. \vspace*{3pt}
 \item There are spontaneous births of pairs of particles of type~$i$ at rate~$\ep \,c_i$ at edges incident to a vertex not performing task~$i$. \vspace*{3pt}
 \item When two particles occupy the same edge as the result of a jump or a spontaneous birth, both particles annihilate.
\end{itemize}
 Using the dynamics of type~1 and type~2 particles, we now study the probability in~\eqref{eq:limit-lattice}, which is the probability that the edge connecting vertices~$x$ and~$x + 1$ is empty.

\indent To prove that the limiting probability in~\eqref{eq:limit-lattice} can be made arbitrarily small, we use the construction shown in Figure~\ref{fig:particles}.
 The proof also requires three simple estimates for the probability of the three events illustrated in the picture.
 These three estimates are given in the following three lemmas, where~$\rho$ is a small positive constant like in the statement of the theorem.
 First, we look at the special case when the defection probability is equal to zero.
\begin{lemma} --
\label{lem:lattice-first}
 Let $\ep = 0$. Then, there exists~$T_1 < \infty$ such that
 $$ \begin{array}{l} \sup_{\xi} \,P \,(\xi_t (x) = \xi_t (x + 1) \,| \,\xi_{t - T} = \xi) < \rho / 3 \quad \hbox{for all} \quad t \geq T \geq T_1. \end{array} $$
\end{lemma}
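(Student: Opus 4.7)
The plan is to pass to the particle system~$\zeta_t$ on the edges of~$\Z$ introduced just before the lemma, and to read off the conditional probability as the occupation probability of a single edge at time~$T$. When~$\ep=0$ the Poisson processes of dots and crosses disappear from the graphical representation, so~$\zeta_t$ evolves as a purely annihilating system: type-$i$ particles perform symmetric nearest-neighbor random walks at rate~$c_i$ (flipping type at each jump), particles colliding on the same edge annihilate, and no births occur. Because
\[
P\bigl(\xi_t(x)=\xi_t(x+1)\,\big|\,\xi_{t-T}=\xi\bigr)=P_{\zeta(\xi)}\bigl(\zeta_T(e)\neq 0\bigr),\qquad e=(x,x+1),
\]
where~$\zeta(\xi)$ denotes the edge-encoding of~$\xi$, the lemma is equivalent to showing that for the annihilating system above, the probability that a fixed edge is occupied at time~$T$ tends to~$0$ as~$T\to\infty$, uniformly over the initial configuration.

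To control this density, I would invoke the classical behavior of annihilating random walks on~$\Z$. First I would reduce to a single-rate model: a time-change/coupling argument shows that the occupation probability of~$e$ in our two-rate system at time~$T$ is dominated by the corresponding probability in the pure rate-$c_1$ annihilating walk started from the same edge configuration, since the extra jumps of type-$2$ particles can only advance their annihilation times without creating new particles. For the rate-$c_1$ annihilating random walk on~$\Z$, the standard parity coupling with coalescing random walks combined with Arratia's bound on the coalescing density at a fixed site yields
\[
\sup_{\eta}P_\eta\bigl(\zeta_T(e)\neq 0\bigr)\leq \frac{C}{\sqrt T}
\]
for some finite constant~$C=C(c_1)$. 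Choosing~$T_1$ large enough that~$C/\sqrt{T_1}<\rho/3$ then gives the lemma.

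The main obstacle, in my view, is securing uniformity in the initial configuration~$\xi$. Monotonicity in the starting state for annihilating dynamics is subtle, because more initial particles produce both more annihilations and more walkers, so one cannot naively compare to the fully occupied initial state. The workaround is the coalescing coupling above, which is genuinely monotone in the initial state and transfers the uniform~$1/\sqrt T$ density bound from coalescing to annihilating walks; the preliminary time-change reduction to a single jump rate then removes the two-type complication and leaves only the choice of~$T_1$.
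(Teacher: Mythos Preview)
Your overall strategy coincides with the paper's: translate to the edge-particle process~$\zeta_t$, observe that at~$\ep=0$ it is a system of annihilating nearest-neighbor walks with no births, and deduce that the occupation probability of a fixed edge vanishes as~$T\to\infty$ uniformly in the starting configuration. The paper's own proof is terser still: it simply records that ``such a system goes extinct,'' picks~$T_1$ accordingly, and uses time-homogeneity of~$\xi_t$ to pass from~$[0,T]$ to~$[t-T,t]$.

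The step that fails in your write-up is the reduction to a single jump rate. The asserted domination of the two-rate occupation probability by that of the pure rate-$c_1$ system is false in general. Take~$e=(0,1)$ initially empty with particles on the two flanking edges~$(-1,0)$ and~$(1,2)$; the~$\xi$-encoding forces these to have types~$2$ and~$1$ respectively (or vice versa). For small~$T$ the probability that~$e$ is occupied is, to first order, $T$ times the rate at which a neighboring particle jumps onto~$e$, namely~$(c_1+c_2)/2$ in the two-rate system versus~$c_1$ in the rate-$c_1$ system, so the two-rate probability is strictly larger. The heuristic that ``extra jumps can only advance annihilation times'' overlooks that an extra jump can just as well carry a particle onto a previously empty edge, or away from a partner about to annihilate it; annihilating dynamics are not monotone under selective speed-ups. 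This does not endanger the lemma itself---the uniform decay of the edge density for one-dimensional annihilating walks with jump rates bounded in~$[c_1,c_2]$ is a genuine fact---but your proposed route to it has a gap. You may either cite the extinction as known, which is all the paper does, or supply a direct argument that does not pass through the unjustified comparison. Your identification of uniformity over the initial configuration as the crux is accurate.
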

\begin{proof}
 Note that, when~$\ep = 0$, there is no spontaneous birth so the system of particles reduces to a system of annihilating symmetric random walks on the one-dimensional lattice that jump either at rate~$c_1$ or at rate~$c_2$.
 Since such a system goes extinct, there is~$T_1 < \infty$ such that
 $$ \begin{array}{l}
    \sup_{\xi} \,P \,(\xi_T (x) = \xi_T (x + 1) \,| \,\xi_0 = \xi) \vspace*{4pt} \\ \hspace*{25pt} = \
    \sup_{\zeta} \,P \,(\zeta_T ((x, x + 1)) \neq 0 \,| \,\zeta_0 = \zeta) < \rho / 3 \quad \hbox{for all} \quad T \geq T_1, \end{array} $$
 but since~$\xi_t$ is a time-homogeneous Markov chain,
 $$ \begin{array}{l}
    \sup_{\xi} \,P \,(\xi_t (x) = \xi_t (x + 1) \,| \,\xi_{t - T} = \xi) \vspace*{4pt} \\ \hspace*{25pt} = \
    \sup_{\xi} \,P \,(\xi_T (x) = \xi_T (x + 1) \,| \,\xi_0 = \xi) < \rho / 3 \quad \hbox{for all} \quad t \geq T \geq T_1. \end{array} $$
 This completes the proof.
\end{proof} \\ \\
 The second step of the proof is to show that, with probability close to one, the set of space-time points that may have influenced the state of edge~$(x, x + 1)$ at time~$t$, which is traditionally called the influence set in the field of
 interacting particle systems, cannot grow too fast.
 Referring to the graphical representation, the influence set is the set of space-time points from which there is an oriented path of solid and dashed arrows leading to either~$x$ or~$x + 1$ at time~$t$.
 More formally, the influence set is defined as
\begin{equation}
\label{eq:influence-set}
  I_t (x, x + 1) := \{(y, s) \in \Z \times \R_+ : (y, s) \leadsto (x, t) \ \hbox{or} \ (y, s) \leadsto (x + 1, t) \}
\end{equation}
 where~$(y, s) \leadsto (z, t)$ means that there exist
 $$ y = x_0, x_1, \ldots, x_n = z \in \Z \quad \hbox{and} \quad s_0 < s_1 < \cdots < s_{n + 1} $$
 such that, for~$j < n$, there is an arrow~$x_j \longrightarrow x_{j + 1}$ or~$x_j \dasharrow x_{j + 1}$ at time~$s_{j + 1}$.
 To state our next two lemmas, we also introduce
 $$ \begin{array}{rcl}
      J_T & := & \{(y, s) \in \Z \times \R_+ : - \ceil{c_2 T} < y < \ceil{c_2 T} \ \hbox{and} \ s \leq T \} \vspace*{4pt} \\
      K_T & := & \{(y, s) \in \Z \times \R_+ : - \ceil{c_2 T} \leq y \leq \ceil{c_2 T} \ \hbox{and} \ s \leq T \} \end{array} $$
 where~$\ceil{r}$ is the smallest integer not less than~$r$.
\begin{lemma} --
\label{lem:lattice-second}
 There exists~$T_2 < \infty$ such that, for all~$t \geq T \geq T_2$,
 $$ P \,(I_t (x, x + 1) \cap (\Z \times [t - T, t]) \not \subset (x, t - T) + J_T) < \rho / 3. $$
\end{lemma}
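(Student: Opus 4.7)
The plan is to reduce the statement to a Poisson large-deviation bound on the left and right envelopes of the influence set. By time-translation invariance of the graphical representation it suffices to work with $t = T$, and to show with high probability that $I_T (x, x+1) \cap (\Z \times [0, T]) \subset (x, 0) + J_T$. Define the backward envelopes
$$ R_u := \max \{y : (y, T - u) \in I_T (x, x+1) \}, \qquad L_u := \min \{y : (y, T - u) \in I_T (x, x+1) \}, $$
so $R_0 = x + 1$, $L_0 = x$, $R$ is nondecreasing and $L$ is nonincreasing in $u$. The inclusion fails iff $R_T \geq x + \ceil{c_2 T}$ or $L_T \leq x - \ceil{c_2 T}$.

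I would then identify $R_u - R_0$ as a Poisson process of rate $(1 - \ep) c_2$. Since the lattice is nearest-neighbor, the only arrows that can extend $R$ when $R_u = r$ are those of the form $r + 1 \to r$: the other neighbor of $r + 1$ is $r + 2$, which does not lie in the current slice. The combined rate of solid and dashed arrows $r + 1 \to r$ is $(1 - \ep) c_1 + (1 - \ep)(c_2 - c_1) = (1 - \ep) c_2$. Because vertex $r + 1$ has never belonged to the influence set during reverse time $[0, u]$, its outgoing Poisson clocks are independent of $R$'s history up to $u$, so by the strong Markov property the waiting time to the next $+1$ jump is $\exponential ((1 - \ep) c_2)$. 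The symmetric argument applies to $L_0 - L_u$. A union bound then yields
$$ P \,(I_T (x, x+1) \cap (\Z \times [0, T]) \not \subset (x, 0) + J_T) \leq 2 \,P \,(\poisson ((1 - \ep) c_2 T) \geq \ceil{c_2 T} - 1). $$

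Since $(1 - \ep) c_2 < c_2$, the threshold $\ceil{c_2 T} - 1$ eventually exceeds the Poisson mean by a factor tending to $1 / (1 - \ep) > 1$, so the Cramer/Chernoff bound for the Poisson distribution makes the right-hand side decay exponentially in $T$ at a rate of order $\ep^2 c_2$; choosing $T_2$ large enough makes it at most $\rho / 3$. The main obstacle is that the coefficient $c_2$ appearing in $J_T$ coincides with the $\ep = 0$ arrow rate, leaving essentially no slack: any wasteful estimate (for instance, bounding the envelope's jumps by the total number of arrows in a large space-time strip around $x$) would produce a threshold below the Poisson mean and yield no decay. One must use both the nearest-neighbor structure, so that only a single oriented edge contributes to the envelope's jump rate, and the strict inequality $(1 - \ep) c_2 < c_2$, which forces $T_2$ to depend on $\ep$ and the decay rate to degenerate as $\ep \to 0$.
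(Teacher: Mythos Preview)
Your overall strategy --- bounding the left and right envelopes of the influence set by Poisson processes and applying a Chernoff bound --- is exactly the paper's. The problem is the rate you assign to the envelope. You take the per-edge arrow rate to be $(1-\ep)c_2$, following the paper's verbal description of the graphical representation literally; but that description omits the factor $1/\deg(x)=1/2$ needed to reproduce the model on~$\Z$ (a vertex in state~$i$ anti-imitates a \emph{uniformly chosen} neighbor at total rate $(1-\ep)c_i$, so each oriented edge carries rate $(1-\ep)c_i/2$). The paper's own proof accordingly bounds the arrow rate by $c_2/2$ per oriented edge, giving Poisson mean $c_2 T/2$ against the threshold $\ceil{c_2 T}-1$: a factor-two gap and an $\ep$-free bound $\exp(-c_2 T/8)$.

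With your rate $(1-\ep)c_2$ there is essentially no slack, and as you yourself note the Chernoff exponent is of order $\ep^2 c_2$, forcing $T_2=T_2(\ep)$ of order $1/\ep^2$. This is not just inelegant --- it is fatal for the role the lemma plays in the proof of Theorem~\ref{th:lattice}. There $T:=\max(T_1,T_2)$ is fixed \emph{before} $\ep_0$, and Lemma~\ref{lem:lattice-third} then needs the expected number of births in $(x,t-T)+K_T$, which is of order $\ep\,c_2^2 T^2$, to be small. If $T\sim 1/\ep^2$ this quantity behaves like $1/\ep^3$ as $\ep\to 0$, so no choice of $\ep_0$ rescues the argument. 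The fix is simply to restore the missing $1/2$: the envelope is dominated by $\poisson(c_2 T/2)$ uniformly in $\ep$, and $T_2$ can then be chosen independent of $\ep$, as the lemma (and its use downstream) requires.
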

\begin{proof}
 For all~$0 \leq s \leq t$, we write
 $$ I_t (x, x + 1) \cap (\Z \times \{t - s \}) = ([l_s, r_s] \cap \Z) \times \{t - s \}. $$
 In other words, $l_s$ and~$r_s$ are the spatial location of the leftmost point and the spatial location of the rightmost point in the influence set at time~$t - s$.
 Since arrows of either type from a given vertex to a given neighbor appear in the graphical representation at rate at most~$c_2 / 2$, it follows from the definition of the influence set that
 $$ l_s = x - \poisson (c_2 \,s / 2) \quad \hbox{and} \quad r_s = (x + 1) + \poisson (c_2 \,s / 2) $$
 in distribution for all~$0 \leq s \leq t$.
 In particular, standard large deviation estimates for the Poisson random variable imply that there exists~$T_2 < \infty$ such that
 $$ \begin{array}{l}
      P \,(I_t (x, x + 1) \cap (\Z \times [t - T, t]) \not \subset (x, t - T) + J_T) \vspace*{4pt} \\ \hspace*{25pt} = \
      P \,(I_t (x, x + 1) \cap (\Z \times \{t - T \}) \not \subset (x - \ceil{c_2 T}, x + \ceil{c_2 T}) \times \{t - T \}) \vspace*{4pt} \\ \hspace*{25pt} \leq \
      P \,(l_T \leq x - \ceil{c_2 T}) + P \,(r_T \geq x + \ceil{c_2 T}) \vspace*{4pt} \\ \hspace*{25pt} \leq \
      2 \,P \,(\poisson (c_2 T / 2) \geq \ceil{c_2 T} - 1) \leq \exp (- c_2 T / 8) < \rho / 3
    \end{array} $$
 for all~$t \geq T \geq T_2$.
 This completes the proof.
\end{proof} \\ \\
 The third step of the proof is to show that, with probability close to one when the defection probability is small, there are no spontaneous births in a translation of~$K_T$.
\begin{lemma} --
\label{lem:lattice-third}
 For all~$T$, there exists~$\ep_0 > 0$ such that, for all~$t \geq T$,
 $$ P \,(\hbox{there is a $\bullet$ or a $\times$ in $(x, t - T) + K_T$}) < \rho / 3 \quad \hbox{for all} \quad \ep < \ep_0. $$
\end{lemma}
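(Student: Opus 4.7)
The plan is to bound the probability by the expected number of marks (dots or crosses) falling in the translated box $(x, t-T) + K_T$, and then choose $\ep_0$ small enough to make this expectation less than $\rho/3$. The key point is that the size of the space-time region depends only on $T$, while the rates at which $\bullet$'s and $\times$'s appear are proportional to $\ep$, so the expected number of such marks can be made arbitrarily small by sending $\ep \to 0$ with $T$ held fixed.

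More concretely, I would first note that by translation invariance of the graphical representation in both space and time, it suffices to estimate the probability that there is a $\bullet$ or a $\times$ in $K_T$ itself (the bound does not depend on $x$ or $t$). The box $K_T$ contains exactly $2 \lceil c_2 T \rceil + 1$ vertices, and for each vertex the $\bullet$-marks form a Poisson process of rate~$\ep c_1$ and the $\times$-marks form an independent Poisson process of rate~$\ep (c_2 - c_1)$, so the superposition is Poisson of rate~$\ep c_2$ per vertex. Summing over vertices and the time interval $[0, T]$, the total number of marks in $K_T$ is Poisson with mean
$$ \ep \,c_2 \,T \,(2 \lceil c_2 T \rceil + 1). $$
By Markov's inequality (or equivalently, since $P(\poisson(\lambda) \geq 1) \leq \lambda$), the probability that $K_T$ contains at least one such mark is bounded by the same quantity.

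It then suffices to choose
$$ \ep_0 := \frac{\rho}{3 c_2 T (2 \lceil c_2 T \rceil + 1)}, $$
so that for all $\ep < \ep_0$ the displayed bound is strictly less than $\rho/3$, uniformly in $t \geq T$. There is no real obstacle here: the lemma is essentially a one-line Poisson calculation, and its role in the larger proof is simply to guarantee that, on the relevant space-time window identified in Lemmas~\ref{lem:lattice-first}--\ref{lem:lattice-second}, the dynamics coincides with the pure annihilating random walk dynamics (no spontaneous births), so that Lemma~\ref{lem:lattice-first} can be applied to conclude.
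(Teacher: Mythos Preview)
Your proposal is correct and follows essentially the same route as the paper: both identify that the total number of $\bullet$'s and $\times$'s in the translated box is Poisson with mean $\ep\,c_2\,T\,(2\lceil c_2 T\rceil + 1)$ and bound the probability of at least one mark by this mean, then take $\ep$ small. The only cosmetic differences are that the paper works directly in the translated box rather than invoking translation invariance, and it passes through the intermediate bound $1-\exp(-(2c_2 T+3)\,T\ep c_2)\le (2c_2 T+3)\,T\ep c_2$ instead of writing down an explicit~$\ep_0$.
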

\begin{proof}
 Dots and crosses appear at each vertex altogether at rate~$\ep c_2$ hence
 $$ \hbox{number of~$\bullet$'s and~$\times$'s in~$(x, t - T) + K_T$} = \poisson ((2 \,\ceil{c_2 T} + 1) \,T \ep c_2) $$
 in distribution.
 This implies that, for all~$T$,
 $$ \begin{array}{l}
      P \,(\hbox{there is a $\bullet$ or a $\times$ in $(x, t - T) + K_T$}) \vspace*{4pt} \\ \hspace*{50pt} = \
      P \,(\poisson ((2 \,\ceil{c_2 T} + 1) \,T \ep c_2) \neq 0) \vspace*{4pt} \\ \hspace*{50pt} \leq \
      1 - \exp (- (2 c_2 T + 3) \,T \ep c_2) \leq (2 c_2 T + 3) \,T \ep c_2 \end{array} $$
 can be made smaller than~$\rho / 3$ by choosing~$\ep > 0$ small.
\end{proof} \\ \\
 Lemmas~\ref{lem:lattice-first}--\ref{lem:lattice-third} identify three events with small probability.
 The complement of these events, which occur with high probability, are illustrated schematically in Figure~\ref{fig:particles}.
 With these three lemmas in hands, we are now ready to prove the theorem. \\ \\
\begin{figure}[t]
 \centering
 \scalebox{0.50}{\input{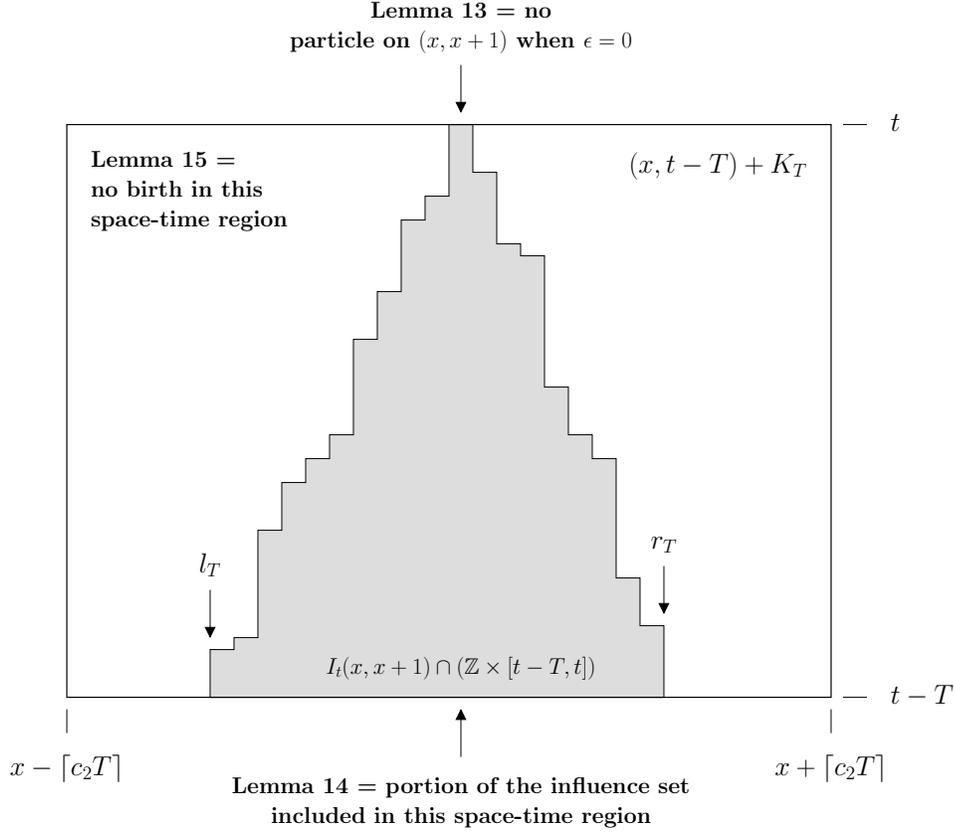}}
 \caption{\upshape{Picture of the proof of Theorem~\ref{th:lattice}.}}
\label{fig:particles}
\end{figure}
\begin{demo}{Theorem~\ref{th:lattice}} --
 Let~$\rho > 0$ small, then
\begin{itemize}
 \item fix~$T_1$ and~$T_2$ such that the conclusions of Lemmas~\ref{lem:lattice-first}--\ref{lem:lattice-second} both hold and \vspace*{3pt}
 \item fix~$\ep_0 > 0$ such that the conclusion of Lemma~\ref{lem:lattice-third} holds for~$T := \max (T_1, T_2)$.
\end{itemize}
 For these parameters, we will prove that
\begin{equation}
\label{eq:lattice-1}
  P \,(\xi_t (x) = \xi_t (x + 1)) < \rho \quad \hbox{for all} \quad (x, t) \in \Z \times (T, \infty) \ \hbox{and} \ \ep < \ep_0.
\end{equation}
 To do this, let~$\xi_{\cdot}^0$ be the process starting from the same initial configuration and constructed from the same graphical representation as~$\xi_{\cdot}$ but ignoring the dots and crosses in the graphical representation
 after time~$t - T$, where time~$t > T$ is fixed.
 In particular,
\begin{equation}
\label{eq:lattice-2}
  \xi_s (y) = \xi_s^0 (y) \quad \hbox{for all} \quad (y, s) \in \Z \times [0, t - T].
\end{equation}
 Note also that the influence sets~\eqref{eq:influence-set} of both processes coincide since both processes are constructed from the same collections of arrows and since the dots and the crosses are irrelevant in the construction
 of the influence sets.
 This, together with~\eqref{eq:lattice-2}, implies that
\begin{equation}
\label{eq:lattice-3}
  \xi_t (x) = \xi_t^0 (x) \quad \hbox{and} \quad \xi_t (x + 1) = \xi_t^0 (x + 1)
\end{equation}
 unless there is a dot or a cross after time~$t - T$, when the two processes may disagree, in the neighborhood of the influence set, i.e.,
 $$ \hbox{there is a~$\bullet$ or a~$\times$ in} \ (I_t (x, x + 1) + [-1, 1]) \cap (\Z \times [t - T, t]), $$
 which occurs whenever
\begin{equation}
\label{eq:lattice-4}
  \begin{array}{l}
    I_t (x, x + 1) \cap (\Z \times [t - T, t]) \not \subset (x, t - T) + J_T \vspace*{3pt} \\ \hspace*{40pt}
  \hbox{or \ there is a $\bullet$ or a $\times$ in the region $(x, t - T) + K_T$}. \end{array}
\end{equation}
 Combining~\eqref{eq:lattice-3}--\eqref{eq:lattice-4} and applying Lemmas~\ref{lem:lattice-first}--\ref{lem:lattice-third}, we deduce that
 $$ \begin{array}{l}
      P \,(\xi_t (x) = \xi_t (x + 1)) \leq P \,(\xi_t^0 (x) = \xi_t^0 (x + 1)) + P \,(\xi_t \neq \xi_t^0 \ \hbox{on} \ \{x, x + 1 \}) \vspace*{4pt} \\ \hspace*{50pt}
                                      \leq P \,(\xi_t^0 (x) = \xi_t^0 (x + 1)) + P \,(\hbox{there is a $\bullet$ or a $\times$ in $(x, t - T) + K_T$}) \vspace*{4pt} \\ \hspace*{80pt} + \
                                           P \,(I_t (x, x + 1) \cap (\Z \times [t - T, t]) \not \subset (x, t - T) + J_T) \vspace*{4pt} \\ \hspace*{50pt}
                                        < \rho / 3 + \rho / 3 + \rho / 3 = \rho. \end{array} $$
 This shows~\eqref{eq:lattice-1}. In particular, for all~$t > T$,
\begin{equation}
\label{eq:lattice-5}
  \begin{array}{rcl}
     P \,(\xi_t (x) = 1) & = & P \,(\xi_t (x) = 1 \,| \,\xi_t (x) = \xi_t (x + 1)) \,P \,(\xi_t (x) = \xi_t (x + 1)) \vspace*{4pt} \\ && \hspace*{10pt} + \
                               P \,(\xi_t (x) = 1 \,| \,\xi_t (x) \neq \xi_t (x + 1)) \,P \,(\xi_t (x) \neq \xi_t (x + 1)) \vspace*{4pt} \\
                         & = & P \,(\xi_t (x) = 1 \,| \,\xi_t (x) = \xi_t (x + 1)) \,P \,(\xi_t (x) = \xi_t (x + 1)) \vspace*{4pt} \\ && \hspace*{10pt} + \
                                   (1/2) \,P \,(\xi_t (x) \neq \xi_t (x + 1)) \vspace*{4pt} \\
                         & \leq & 1 \times \rho + (1/2)(1 - \rho) = (1/2)(1 + \rho). \end{array}
\end{equation}
 Similarly, for all~$t > T$, we have
\begin{equation}
\label{eq:lattice-6}
  P \,(\xi_t (x) = 1) \geq 0 \times \rho + (1/2)(1 - \rho) = (1/2)(1 - \rho).
\end{equation}
 In addition, since the initial distribution of the process is a Bernoulli product measure and the Poisson processes in the graphical representation are independent, the ergodic theorem is applicable, from which it follows that, for all~$t > T$,
\begin{equation}
\label{eq:lattice-7}
  \begin{array}{l} \lim_{N \to \infty} \ \displaystyle \frac{1}{2N + 1} \,\sum_{|x| \leq N} \ind \{\xi_t (x) = 1 \} = P \,(\xi_t (x) = 1). \end{array}
\end{equation}
 Combining~\eqref{eq:lattice-5}--\eqref{eq:lattice-7}, we conclude that
 $$ \begin{array}{rcl}
      (1/2)(1 - \rho) & \leq & \lim_{s, N \to \infty} \,\phi_N (s) \vspace*{10pt} \\
                         & = & \lim_{s, N \to \infty} \,\displaystyle \frac{1}{s \,(2N + 1)} \,\int_0^s \sum_{|x| \leq N} \ind \{\xi_t (x) = 1 \} \,dt \vspace*{8pt} \\
                         & = & \lim_{s, N \to \infty} \,\displaystyle \frac{1}{(s - T)(2N + 1)} \,\int_T^s \sum_{|x| \leq N} \ind \{\xi_t (x) = 1 \} \,dt \vspace*{4pt} \\
                      & \leq & (1/2)(1 + \rho), \end{array} $$
 which completes the proof of the theorem.
\end{demo} \\

%%%%%%%%%%%%%%%%%%%%%%%%%%%%%%%%%%%%%%%%%%%%%%%%%%%%%%%%%%%%%%%%%%%%%%%%%%%%%%%%%%%%%%%%%%%%%%%%%%%%%%%%%%%%%%%%%%%%%%%%%%%%%%%%%%%%%%%%%%

\noindent\textbf{Acknowledgment}.
 The authors would like to thank Yun Kang and Jennifer Fewell for the fruitful discussions about social insects that motivated this work.

%%%%%%%%%%%%%%%%%%%%%%%%%%%%%%%%%%%%%%%%%%%%%%%%%%%%%%%%%%%%%%%%%%%%%%%%%%%%%%%%%%%%%%%%%%%%%%%%%%%%%%%%%%%%%%%%%%%%%%%%%%%%%%%%%%%%%%%%%%

\end{document}